
\documentclass{amsart}

\usepackage{amsmath,amssymb,amsthm}
\usepackage{pinlabel}

\usepackage{units} 

\hyphenation{mani-fold mani-folds sub-mani-fold sub-mani-folds topo-logy
Topo-logy geo-metry Geo-metry Le-gen-drian Ana-lysis
Forschungs-insti-tut}

\newtheorem{prop}{Proposition}[section]
\newtheorem{thm}[prop]{Theorem}
\newtheorem{lem}[prop]{Lemma}

\theoremstyle{definition}

\newtheorem{defn}[prop]{Definition}
\newtheorem{ex}[prop]{Example}
\newtheorem{rem}[prop]{Remark}

\newtheorem*{ack}{Acknowledgements}


\def\co{\colon\thinspace}

\newcommand{\tb}{\mathtt{tb}}
\newcommand{\otb}{\overline{\mathtt{tb}}}
\newcommand{\lk}{\mathtt{lk}}
\newcommand{\vlk}{\underline{\lk}}
\newcommand{\rot}{\mathtt{rot}}
\newcommand{\vrot}{\underline{\rot}}
\newcommand{\ttt}{\mathtt{t}}

\newcommand{\rma}{\mathrm{a}}
\newcommand{\rmb}{\mathrm{b}}

\newcommand{\rme}{\mathrm{e}}
\newcommand{\rmi}{\mathrm{i}}

\newcommand{\bfx}{\mathbf{x}}
\newcommand{\bfy}{\mathbf{y}}

\newcommand{\C}{\mathbb{C}}
\newcommand{\N}{\mathbb{N}}
\newcommand{\Q}{\mathbb{Q}}
\newcommand{\R}{\mathbb{R}}
\newcommand{\Z}{\mathbb{Z}}

\newcommand{\LL}{\mathbb{L}}

\newcommand{\xist}{\xi_{\mathrm{st}}}

\newcommand{\lambdac}{\lambda_{\mathrm{c}}}
\newcommand{\lambdas}{\lambda_{\mathrm{s}}}
\newcommand{\loss}{\widehat{\mathfrak{L}}}

\DeclareMathOperator{\Int}{Int}


\begin{document}

\author[H.~Geiges]{Hansj\"org Geiges}
\address{Mathematisches Institut, Universit\"at zu K\"oln,
Weyertal 86--90, 50931 K\"oln, Germany}
\email{geiges@math.uni-koeln.de}

\author[S.~Onaran]{Sinem Onaran}
\address{Department of Mathematics, Hacettepe University,
06800 Beytepe-Ankara, Turkey}
\email{sonaran@hacettepe.edu.tr}

\title{Exceptional Legendrian torus knots}

\date{}

\begin{abstract}
We present classification results for exceptional Legendrian realisations
of torus knots. These are the first results of that kind for
non-trivial topological knot types.
Enumeration results of Ding--Li--Zhang
concerning tight contact structures on certain Seifert fibred manifolds with
boundary allow us to place upper bounds on the number of tight
contact structures on the complements of torus knots; the
classification of exceptional realisations of these torus knots
is then achieved by exhibiting sufficiently many realisations
in terms of contact surgery diagrams. We also
discuss a couple of general theorems about the existence
of exceptional Legendrian knots.
\end{abstract}



\maketitle


\section{Introduction}
The classification of Legendrian knots is one of the basic
questions in $3$-dimen\-sio\-nal contact topology. The first
classification result --- for Legendrian
realisations of the topological unknot in the $3$-sphere $S^3$
with its standard tight contact structure~$\xist$ ---
is due to Eliashberg and Fraser~\cite{elfr98}; see also~\cite{elfr09}.
Legendrian realisations of torus knots and the figure eight knot
in $(S^3,\xist)$ were classified by Etnyre and Honda~\cite{etho01}.
In those topological knot types one can determine the range of the classical
invariants $\tb$ (Thurston--Bennequin invariant) and $\rot$
(rotation number), and it is shown that these invariants suffice
to distinguish the knots up to Legendrian isotopy.

In general, further invariants are required for a complete
classification. The first example of that kind was discovered by
Chekanov~\cite{chek02} and Eliashberg: there are two Legendrian realisations
of the $5_2$ knot with the same classical invariants that can
be distinguished by a differential graded algebra associated with
the Legendrian knot.

The Legendrian classification question can be extended in various directions:
Legendrian links~\cite{dige07}, knots or links in other tight contact
$3$-manifolds~\cite{baet12,cdl15,dige10,ghig06,onar17}, or knots in
overtwisted contact structures~\cite{elfr09,etny13,geon15}.
Here we have only cited a few examples where again the classical invariants
suffice to distinguish all Legendrian realisations.

Knots of the latter kind are the object of interest in this
paper. They fall into two classes.

\begin{defn}
A Legendrian knot $L$ in an overtwisted contact $3$-manifold $(M,\xi)$
is called \emph{exceptional} (or \emph{non-loose}) if its complement
$(M\setminus L,\xi|_{M\setminus L})$ is tight; $L$
is called \emph{loose} if the contact structure is still
overtwisted when restricted to the knot complement.
\end{defn}

As shown by Etnyre~\cite[Theorem~1.4]{etny13}, loose Legendrian knots in
null-homolo\-gous knot types in any contact $3$-manifold are classified by
the classical invariants (and the range of the invariants is only restricted
by $\tb+\rot$ being odd). See also \cite[Theorem~0.1]{dyma01}
and \cite[Theorem~6]{dige09}.

\begin{rem}
Here by `classification' we always mean the classification of
\emph{oriented} Legendrian knots up to \emph{coarse equivalence},
i.e.\ up to contactomorphism of the ambient manifold.
Since the contactomorphism groups of contact manifolds
other than $(S^3,\xist)$ are not, in general, connected,
the classification up to Legendrian isotopy is more subtle.
\end{rem}

The classification of exceptional Legendrian knots is more involved
than that of loose ones.
Exceptional topological unknots were classified by Eliashberg and
Fraser~\cite{elfr09}; see \cite[Theorem~5.1]{geon15} for an
alternative argument.
Recall that, up to isotopy, there is an integer family of overtwisted
contact structures on~$S^3$, which are distinguished by the Hopf invariant
$h\in\Z$ of the underlying tangent $2$-plane field. We work
instead with the $d_3$-invariant (see \cite{dgs04} for its general definition);
for $S^3$ the two invariants are related by $d_3=-h-1/2$.
We write $\xi_d$ for the overtwisted contact structure on $S^3$
characterised by $d_3(\xi_d)=d\in\Z+1/2$.
The standard contact structure on $S^3$ has $d_3(\xist)=-1/2$; thus,
whenever we have a contact structure on $S^3$ (e.g.\ a structure
obtained by one of the surgery
diagrams used in this paper) with $d_3\neq-1/2$, we know right away that
the contact structure is overtwisted.

\begin{thm}[Eliashberg--Fraser]
\label{thm:EF}
Let $L\subset(S^3,\xi)$ be an exceptional unknot in an overtwisted contact
structure $\xi$ on the $3$-sphere. Then
$\xi=\xi_{1/2}$, and
the classical invariants can take the values
\[ \bigl(\tb(L),\rot(L)\bigr)\in\bigl\{(n,\pm(n-1))\co n\in\N\bigr\}.\]
These invariants determine $L$ up to coarse equivalence.
\end{thm}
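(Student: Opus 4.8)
The plan is to prove this classification by combining an existence construction via contact surgery with a uniqueness argument based on convex surface theory applied to the knot complement. For the existence half, I would realise each candidate pair $(\tb,\rot)=(n,\pm(n-1))$ by an explicit contact surgery diagram on a Legendrian unknot in $(S^3,\xist)$. Starting from the standard maximal-$\tb$ Legendrian unknot (with $\tb=-1$) and performing suitable stabilisations and Legendrian surgeries, one produces overtwisted contact structures on $S^3$ containing a distinguished Legendrian unknot; I would then verify that the resulting ambient structure has $d_3=1/2$ (so that $\xi=\xi_{1/2}$, as the theorem asserts) and that the knot has the claimed classical invariants. The formulas for how $\tb$, $\rot$, and $d_3$ transform under stabilisation and contact $(\pm1)$-surgery are standard and would let me read off the invariants directly from the diagram.

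For the uniqueness half --- the claim that these invariants determine $L$ up to coarse equivalence --- the core is to analyse the tight contact structure on the complement $S^3\setminus L$. Since $L$ is an unknot, its complement is a solid torus, and the exceptional condition means this solid torus carries a tight contact structure with convex boundary. I would fix the boundary slope and the dividing set determined by $(\tb,\rot)$, and then invoke the classification of tight contact structures on the solid torus (in the spirit of Honda's convex-surface and bypass machinery) to conclude that, for each admissible pair of invariants, there is a \emph{unique} such tight structure up to the appropriate equivalence. Gluing this back to recover $(S^3,\xi)$ with its distinguished knot then shows that any two exceptional unknots with the same invariants are coarsely equivalent.

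The remaining point is to pin down exactly which pairs $(n,\pm(n-1))$ are realisable and to rule out everything else. The constraint $\rot=\pm(\tb-1)$ reflects a sharp bound on the rotation number once $\tb$ is fixed for an exceptional unknot; I would establish the upper direction by the convex-surface count (the dividing set on the boundary torus leaves no room for other rotation numbers once tightness of the complement is imposed), and the lower direction (realisability) by the surgery constructions above. Matching the counts from these two sides is what forces the classification to be complete and forces $\xi=\xi_{1/2}$.

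The main obstacle I anticipate is the uniqueness step: ensuring that the convex-surface classification of tight structures on the complementary solid torus genuinely sees only one structure per invariant pair, and that the relevant notion of equivalence (coarse equivalence of the knot, i.e.\ contactomorphism of the pair) corresponds precisely to equivalence of these complementary contact structures. Controlling the contactomorphism group of $(S^3,\xi_{1/2})$ well enough to pass between ``unique tight structure on the complement'' and ``unique knot up to coarse equivalence'' is the delicate part, and it is where one must be careful that no extra discrete choices (such as a sign ambiguity matching the $\pm$ in the rotation number) are being lost or double-counted.
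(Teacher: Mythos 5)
First, a point of orientation: the paper does not prove Theorem~\ref{thm:EF} at all --- it is quoted as a result of Eliashberg--Fraser \cite{elfr09}, with an alternative argument referenced in \cite[Theorem~5.1]{geon15}. So there is no in-paper proof to measure you against. That said, your outline coincides with the strategy of that alternative proof and with the method this paper uses for torus knots: bound the number of Legendrian realisations by classifying tight contact structures on the complement (here a solid torus, via Honda's convex-surface classification), subtract the realisations already accounted for in $(S^3,\xist)$, and realise the remainder by explicit contact surgery diagrams whose $d_3$-invariant identifies the ambient structure. The observation that for coarse equivalence the passage from ``unique tight structure on the complement with given boundary data and relative Euler class'' back to ``unique knot up to contactomorphism of the pair'' is essentially formal (glue back the standard neighbourhood) is also correct; the contactomorphism-group subtlety you worry about only bites for classification up to Legendrian isotopy, which the paper explicitly sets aside.

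The genuine gaps are in the two quantitative steps, which is where all the content of the theorem sits. On the existence side, ``start from the maximal-$\tb$ unknot and stabilise'' cannot produce the whole family: stabilisation only lowers $\tb$, and the exceptional unknots have $\tb=n$ for \emph{every} $n\in\N$ with no maximal element, so you need a separate surgery diagram (or a destabilisation argument) for each $n$, together with the computation showing $\tb=n$, $\rot=\pm(n-1)$ and $d_3=1/2$; none of this is specified. On the uniqueness side, you assert that the convex-surface count ``leaves no room'' for other invariants, but you must actually run Honda's count of tight structures on the solid torus for each boundary slope: for $\tb\leq -1$ the count is exhausted by the standard realisations in $\xist$ (so no exceptional unknots exist there), for $\tb=0$ the boundary slope is meridional and no tight structure with minimal convex boundary exists, and for $\tb=n\geq 1$ the count is exactly two, matching the two orientations/signs of $\rot$. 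Without these numbers the claim that the list is complete, and in particular that $\xi$ is forced to be $\xi_{1/2}$, is not established. As an outline the route is the right one; as a proof it is missing precisely the computations that make the theorem true.
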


In \cite{geon15} we classified exceptional rational
unknots in lens spaces. In the present paper we achieve the first
classification of exceptional realisations of non-trivial
topological knot types.

One issue that did not arise in \cite{elfr09} or \cite{geon15} was that of
positive Giroux torsion (as defined in~\cite{giro94}),
because there the knot complement was a solid torus $S^1\times D^2$,
which would become overtwisted by introducing Giroux torsion along
the boundary, i.e.\ a Giroux torsion domain $T^2\times[0,1]$ with
$T^2\times\{t\}$ isotopic to the boundary torus $S^1\times\partial D^2$,
cf.~\cite[p.~68]{cdl15}. For non-trivial topological knot types, exceptional
Legendrian realisations may well have positive Giroux torsion in the
complement.

As shown by Etnyre~\cite{etny13}, see also the discussion
in~\cite{stve09}, one can produce infinitely many exceptional Legendrian
knots --- with the same classical invariants --- that are not coarsely
equivalent, by introducing Giroux torsion along an incompressible torus
in the knot complement, which does not change the ambient contact structure.

When one wants to classify exceptional Legendrian realisations
in a given topological knot type, it is therefore reasonable to impose the
stronger condition that the knot complement is not only tight,
but that it has zero Giroux torsion. Recall that
overtwisted contact structures have infinite Giroux torsion.
We adopt the following terminology from~\cite{stve09}.

\begin{defn}
A Legendrian knot $L$ in an overtwisted contact $3$-manifold $(M,\xi)$
is called \emph{strongly exceptional} if its complement
$(M\setminus L,\xi|_{M\setminus L})$ has zero Giroux torsion.
\end{defn}

Even in the strongly exceptional case,
the classical invariants may not distinguish all Legendrian realisations,
as shown in~\cite{stve09}.

Our aim in this paper will be to classify strongly exceptional
realisations of certain torus knots. We begin in
Section~\ref{section:exceptional-exists} with the observation
that, as a consequence of the Legendrian surgery presentation
theorem~\cite{dige04,dgs04}, any closed, overtwisted contact $3$-manifold
contains an exceptional Legendrian knot. (This also follows from the
work of Etnyre and Vela-Vick~\cite{etvv10}.) In
Section~\ref{section:exceptional-1/2} we give a sufficient
criterion for a topological knot type to admit an exceptional
realisation.

The strategy for classifying exceptional torus knots is similar to
the one we employed in~\cite{geon15} for the classification
of exceptional rational unknots. The complement of a torus knot
is a bounded $3$-manifold that admits a Seifert fibration
over the disc with two multiple fibres. For certain
boundary conditions, the tight contact structures (of zero Giroux
torsion) on such manifolds
have been classified by Ding--Li--Zhang~\cite{dlz13}. Their results give us
an upper bound on the number of Legendrian realisations whose complement
has zero Giroux torsion, in other words, strongly exceptional
realisations or realisations in $(S^3,\xist)$.
Realisations of the latter kind have been classified, as mentioned before,
by Etnyre--Honda~\cite{etho01}. It then remains to exhibit sufficiently
many strongly exceptional realisations
in terms of surgery diagrams. The new ingredient for
establishing that the examples are indeed strongly exceptional
is the LOSS invariant $\loss$ from~\cite{loss09}
and a vanishing theorem for $\loss$ due to
Stipsicz and V\'ertesi~\cite{stve09}.

In Section~\ref{section:exceptional-torus} we describe the Seifert
fibration of torus knot complements and summarise the relevant
results from~\cite{dlz13}. In Section~\ref{section:lht} we then
apply this to the left-handed trefoil knot; here the results of
\cite{dlz13} allow the most comprehensive classification.

\begin{thm}
\label{thm:lht}
The number of strongly exceptional Legendrian realisations $L$ of the
left-handed trefoil knot in $S^3$ is as follows:
\begin{itemize}
\item[(a)] For $\tb(L)=-5$ and $\tb(L)<-6$, there are precisely
two such realisations; they live in $(S^3,\xi_{3/2})$.
\item[(b)] For $\tb(L)=1$, there is at least one realisation.
\item[(c)] For each other value of $\tb(L)\in\Z$, there are
at least two realisations.
\end{itemize}
\end{thm}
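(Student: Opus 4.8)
The plan is to follow the strategy outlined in the introduction: bound the number of strongly exceptional realisations from above by means of the Ding--Li--Zhang enumeration, and from below by exhibiting explicit contact surgery diagrams whose strong exceptionality is certified by the LOSS invariant. First I would describe the complement of the left-handed trefoil as a Seifert fibred space $M$ over the disc with two exceptional fibres, and fix a basis for $H_1(\partial M)$ adapted to the Seifert structure. A strongly exceptional realisation $L$ with a prescribed value of $\tb(L)$ corresponds to a tight, zero-Giroux-torsion contact structure on $M$ whose dividing set on $\partial M$ has slope determined by $\tb(L)$ via the contact framing, together with a filling of the complementary solid torus. Since the standard neighbourhood of a Legendrian knot carries a unique tight germ, each such contact structure on $M$ extends uniquely across the knot; counting realisations thus reduces to counting these boundary-adapted tight structures on~$M$.

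Next I would feed the resulting boundary slopes into the classification of~\cite{dlz13} to obtain, for each $\tb(L)$, an upper bound on the number of tight zero-torsion structures on $M$ realising that boundary condition. Subtracting the realisations that fill to $(S^3,\xist)$ --- these are precisely the Etnyre--Honda Legendrian trefoils~\cite{etho01}, which occur only for $\tb(L)\le-6$ --- leaves an upper bound on the number of strongly exceptional realisations. For the slopes corresponding to $\tb(L)=-5$ and $\tb(L)<-6$ the Ding--Li--Zhang count is sharp enough to yield the bound~$2$.

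For the matching lower bound I would write down explicit contact surgery diagrams realising the left-handed trefoil, read off $\tb$, $\rot$ and $d_3$ of the resulting contact structure on $S^3$ to place each example in a specific $\xi_d$, and verify strong exceptionality by computing the LOSS invariant $\loss(L)$ and invoking the vanishing theorem of Stipsicz--V\'ertesi~\cite{stve09}: non-vanishing of $\loss(L)$ forces the complement to have zero Giroux torsion, so $L$ is strongly exceptional. Distinctness of examples sharing a value of $\tb(L)$ would then follow from their values of $\rot(L)$ or $d_3$, i.e.\ from coarse contactomorphism invariants. Matching the two bounds yields the exact count of two in case~(a), with both realisations in $(S^3,\xi_{3/2})$; in cases (b) and (c) the Ding--Li--Zhang enumeration does not constrain the relevant boundary data tightly enough, so only the lower bounds coming from the surgery constructions are asserted.

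The main obstacle is the third step: certifying that the surgered knots are genuinely strongly exceptional and pairwise inequivalent. This requires an honest computation of $\loss(L)$ from the surgery presentation --- delicate because the invariant lives in the knot Floer homology of $S^3$ and must be shown to be non-zero --- together with careful bookkeeping of $d_3$ and $\rot$, both to confirm that the ambient structure is the claimed overtwisted $\xi_d$ and to rule out coarse equivalences between distinct diagrams. The boundary slopes attached to $\tb(L)=-6$ and $\tb(L)=1$ are exactly the ones falling outside the sharp range of~\cite{dlz13}, which is why these values are singled out in cases (c) and (b).
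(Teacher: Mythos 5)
Your overall architecture is the same as the paper's: Ding--Li--Zhang gives an upper bound on tight zero-torsion structures on the trefoil complement for the admissible boundary slopes, the Etnyre--Honda count (which is nonempty only for $\tb\le -6$) is subtracted, and explicit surgery diagrams supply the matching lower bounds. Two points, however, need correction. First, your diagnosis of why $\tb(L)=1$ is singled out is wrong: it is not a matter of boundary slopes. \emph{Every} value $\tb(L)>-5$ has slope $s=1+1/(\tb(L)+6)\in(1,2)$, outside both applicable cases of \cite{dlz13}, so no upper bound is available anywhere in that range; the value $\tb=1$ is special only because the exhibited realisations there (the diagrams with $\tb=m-5$, $\rot=\pm(m-6)$, at $m=6$) have $\rot=0$, so the two orientations of $L$ cannot be distinguished by the classical invariants and only one realisation can be certified. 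Without this observation case (b) versus case (c) is not explained.

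Second, you leave the certification of strong exceptionality for the examples with $\tb\ge -5$ as an ``honest computation of $\loss(L)$'' and flag it as the main obstacle --- but that computation is not what the paper does, and attempting it from scratch would be genuinely hard. The paper only uses the LOSS invariant for the family with $\tb\le -6$, and there it does not compute anything: it cites the known non-vanishing of $\loss$ for one specific $\tb=-6$ trefoil from \cite{loss09} and propagates it by invariance under negative stabilisation, with the Stipsicz--V\'ertesi vanishing theorem upgrading ``exceptional'' to ``strongly exceptional''. For the $\tb\ge-5$ family the mechanism is entirely different: each diagram contains a single contact $(+1)$-surgery and $L$ is the push-off of that surgery curve, so cancelling the $(+1)$-surgery by a $(-1)$-surgery on $L$ yields a strongly fillable manifold; since positive Giroux torsion obstructs strong fillability (Gay), the complement of $L$ has zero torsion. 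You need one of these two arguments (or a substitute) to close the lower bound; as written, your proposal identifies the gap but does not bridge it.
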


All the known exceptional realisations in cases (b) and (c) likewise
live in the contact structure $\xi_{3/2}$ on~$S^3$. We conjecture that
the examples we shall describe constitute
a complete list of strongly exceptional realisations
of the left-handed trefoil knot.

In Section~\ref{section:rht} we discuss the classification of
strongly exceptional right-handed trefoils. Finally,
in Section~\ref{section:general} we give some classification results
for two general classes of torus knots. Some of the detailed
calculations of the classical invariants and the
$d_3$-invariant are relegated to Section~\ref{section:computations}.

\begin{rem}
As we shall see presently, all closed overtwisted contact $3$-manifolds
contain exceptional Legendrian knots. The two theorems we stated so far
indicate that
on a given differential manifold (here: the $3$-sphere), exceptional
realisations of a given knot type may exist in only one or very few
overtwisted contact structures. The examples below will confirm
this observation. This suggests that \emph{the `most simple' exceptional knot
in an overtwisted contact structure is a measure for its
complexity.}
\end{rem}
\section{Existence of exceptional knots}
\label{section:exceptional-exists}
The first example of an exceptional Legendrian knot was found
by Dymara~\cite{dyma01}. The intricacy of her construction
could lead one to expect such knots to be scarce.
The following theorem, however, shows that
every overtwisted contact manifold contains an exceptional knot.

\begin{thm}
\label{thm:exceptional-exists}
Any closed overtwisted contact $3$-manifold contains an exceptional Legendrian
knot.
\end{thm}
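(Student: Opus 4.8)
The plan is to exploit the Legendrian surgery presentation theorem cited in the introduction, which says that every closed contact $3$-manifold $(M,\xi)$ — overtwisted or tight — is obtained from $(S^3,\xist)$ by contact $(\pm 1)$-surgery along some Legendrian link $\LL$. I would begin with such a surgery diagram for the given overtwisted $(M,\xi)$. The idea is that one of the surgery components, when viewed back inside the surgered manifold, will furnish the desired exceptional knot, because the complement of a surgery curve in $M$ is contactomorphic to the complement of that same curve (now Legendrian) in the ambient $(S^3,\xist)$ after a correspondingly modified surgery on the remaining components. The key point to exploit is that $(S^3,\xist)$ is tight and that tightness is preserved in the relevant complement.

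More precisely, I would argue as follows. Realise $(M,\xi)$ by contact surgery on a Legendrian link $\LL = L_0 \cup L_1 \cup \cdots \cup L_k$ in $(S^3,\xist)$. Let $K\subset M$ be the Legendrian knot that is the core of the surgery solid torus glued in along $L_0$. The complement $M\setminus K$ is then obtained from $S^3\setminus L_0$ by performing the remaining contact surgeries along $L_1,\dots,L_k$ only. Since $(S^3,\xist)$ is tight and $L_0$ is a genuine Legendrian knot in it, the complement $S^3\setminus \nu(L_0)$ inherits a tight contact structure, and contact $(\pm 1)$-surgery on Legendrian knots disjoint from $L_0$ can be arranged to keep this complement tight — here I would lean on the fact that Legendrian surgery (contact $(-1)$-surgery) preserves tightness, handling the $(+1)$-surgeries by first cancelling or reformulating them, or by invoking that the surgery dual description keeps us inside a tight piece. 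Thus $K$ is exceptional: its complement in the overtwisted $(M,\xi)$ is tight.

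The main obstacle I expect is controlling the contact $(+1)$-surgeries, since these do not automatically preserve tightness the way Legendrian ($(-1)$-)surgeries do. To get around this I would either (i) choose the surgery presentation so that the distinguished component $L_0$ is the one carrying a $(+1)$-surgery and stabilise the diagram so that the \emph{remaining} surgeries are all Legendrian surgeries — recalling that any contact $(+1)$-surgery coefficient can be traded, after stabilisation, against a single $(-1)$-surgery via the standard cancellation/conversion lemmas — or (ii) appeal directly to the alternative route via Etnyre and Vela-Vick~\cite{etvv10}, whose results on the contact-geometric structure of binding and transverse/Legendrian complements give the same conclusion. The cleanest writeup will isolate a single lemma: \emph{after suitable stabilisation, any overtwisted $(M,\xi)$ admits a surgery presentation in which removing one chosen surgery core leaves a complement built from $(S^3\setminus\nu(L_0),\xist)$ by Legendrian surgeries alone}, from which tightness of the complement, and hence exceptionality of the core $K$, follows immediately.
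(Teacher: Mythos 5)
Your overall strategy---a contact $(\pm1)$-surgery presentation with one distinguished component, whose dual knot in $(M,\xi)$ is the candidate exceptional knot, with tightness of its complement coming from the tight side of the surgery picture---is the same as the paper's. The genuine gap is in the step where tightness is actually established. You propose to conclude that $(S^3\setminus\nu(L_0),\xist|)$ remains tight after performing the remaining contact surgeries, ``leaning on the fact that Legendrian surgery preserves tightness''. This is not a valid principle: contact $(-1)$-surgery does not in general preserve tightness, and you are moreover applying it to a manifold with boundary, where no such gluing statement is available. What Legendrian surgery does preserve is symplectic fillability---it is a symplectic handle attachment---and fillability implies tightness. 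The paper's proof is organised around exactly this point: choose the presentation with a single $(+1)$-surgery on $L_0$ (this is \cite[Corollary~1.4]{dgs04}), take the Legendrian push-off $L$ of $L_0$ viewed in $(M,\xi)$, and note that contact $(-1)$-surgery on $L$ cancels the $(+1)$-surgery on $L_0$, so that the result is the \emph{closed} manifold obtained from $(S^3,\xist)$ by $(-1)$-surgeries on $\LL\setminus L_0$ alone. That closed manifold is symplectically fillable, hence tight, and since the complement of $L$ in $(M,\xi)$ embeds into it, the complement is tight. To repair your argument you need (a) the fillability argument in place of ``Legendrian surgery preserves tightness'', and (b) an embedding of the knot complement into a closed fillable manifold, rather than surgering the bounded complement directly.

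A secondary point: your proposed mechanism for arranging a single $(+1)$-surgery---that a $(+1)$-surgery can be ``traded, after stabilisation, against a single $(-1)$-surgery''---is not a correct statement of any standard lemma; the cancellation lemma cancels a $(+1)$- against a $(-1)$-surgery on a push-off pair, it does not convert one into the other. The single-$(+1)$ normal form is precisely \cite[Corollary~1.4]{dgs04} (see also \cite[Theorem~3.4]{geze13}) and should simply be cited. Your fallback route (ii) via \cite{etvv10} is legitimate and is the alternative proof indicated in the remark preceding the paper's proof, but as written it is an appeal to a different theorem rather than an argument.
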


\begin{rem}
One way to prove this theorem is via the theory of open books
adapted to contact structures. According to \cite[Theorem~1.2]{etvv10},
the binding $B$ of any open book decomposition of $M$ supporting $\xi$
has zero Giroux torsion in its complement, i.e.\ $B$ is a strongly
exceptional transverse knot. By \cite[Proposition~1.2]{etny13},
any Legendrian approximation of $B$ is then likewise strongly
exceptional. (That proposition is formulated for exceptional
rather than strongly exceptional knots, but the proof also works
in the strongly exceptional case.)

Here we give a surgical proof that develops the idea on which
the explicit realisations of exceptional knots in the present paper
will be based.
\end{rem}

\begin{proof}[Proof of Theorem~\ref{thm:exceptional-exists}]
According to the surgery presentation theorem of~\cite{dige04},
any closed contact $3$-manifold $(M,\xi)$
can be obtained by contact $(\pm 1)$-surgery
on a suitable Legendrian link $\LL$ in $(S^3,\xist)$. As observed in
\cite[Corollary~1.4]{dgs04}, one may choose the Legendrian link
$\LL$ such that a contact $(+1)$-surgery on a single component $L_0$ of
$\LL$ and contact $(-1)$-surgeries on all other components
produces the desired manifold $(M,\xi)$.

The Legendrian push-off $L$ of $L_0$ in $(S^3,\xist)$ may be
regarded as a Legendrian knot in the surgered manifold $(M,\xi)$.
By the cancellation lemma of~\cite{dige04}, cf.\
\cite[Proposition~6.4.5]{geig08}, contact $(-1)$-surgery
on $L$ cancels the contact $(+1)$-surgery on~$L_0$. In other words,
contact $(-1)$-surgery on $L\subset (M,\xi)$ produces the same
contact manifold as contact $(-1)$-surgeries on $(\LL\setminus L_0)
\subset (S^3,\xist)$. Contact $(-1)$-surgery is symplectic handlebody
surgery \cite[Section~6.2]{geig08}, thus, the latter manifold is
symplectically fillable and hence tight. In particular, the complement of
$L$ in $(M,\xi)$ must have been tight.
\end{proof}

\begin{rem}
The proof of \cite[Corollary 1.4]{dgs04} implicitly relies
on the theory of open books adapted to contact structures.
For a proof entirely in the surgical realm see
\cite[Theorem~3.4]{geze13}.
\end{rem}

The following proposition says that exceptional knots realised
as in the proof of Theorem~\ref{thm:exceptional-exists}
will always be \emph{strongly} exceptional.

\begin{prop}
\label{prop:strongly}
Let $(M,\xi)$ be an overtwisted contact $3$-manifold
represented by a contact $(\pm 1)$-surgery diagram
containing a single $(+1)$-surgery. Then the
Legendrian knot $L$ in $(M,\xi)$ represented by the push-off
of the $(+1)$-surgery curve is strongly exceptional.
\end{prop}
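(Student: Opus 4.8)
The plan is to reuse the surgical set-up from the proof of Theorem~\ref{thm:exceptional-exists} and to upgrade the tightness of the complement, already established there, to the stronger statement of zero Giroux torsion by means of the LOSS invariant. Write $\LL=L_0\cup L_1\cup\cdots\cup L_k$ for the surgery link, with contact $(+1)$-surgery on $L_0$ and contact $(-1)$-surgeries on the remaining components, and let $L\subset(M,\xi)$ be the push-off of $L_0$. Since $(M,\xi)$ is overtwisted, its Ozsv\'ath--Szab\'o contact invariant vanishes; the crucial point is that the LOSS invariant $\loss(L)\in\widehat{HFK}(-M,L)$ of the knot can nevertheless be non-zero, as it detects tightness of the \emph{complement} rather than of the ambient manifold. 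I would aim to show $\loss(L)\neq 0$ and then appeal to the vanishing theorem of Stipsicz--V\'ertesi, which asserts that positive Giroux torsion in the complement forces $\loss(L)=0$. Contraposition then yields zero Giroux torsion in $M\setminus L$, and together with the tightness from Theorem~\ref{thm:exceptional-exists} this is exactly strong exceptionality.

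The core step is therefore to prove $\loss(L)\neq 0$. Here I would invoke the cancellation lemma exactly as before: contact $(-1)$-surgery on the push-off $L$ cancels the contact $(+1)$-surgery on $L_0$, so contact $(-1)$-surgery on $L\subset(M,\xi)$ produces the closed contact manifold $(M',\xi')$ obtained by contact $(-1)$-surgeries on $L_1\cup\cdots\cup L_k\subset(S^3,\xist)$. As noted in the proof of Theorem~\ref{thm:exceptional-exists}, this $(M',\xi')$ is symplectically, indeed Stein, fillable, so by the non-vanishing theorem of Ozsv\'ath--Szab\'o its contact invariant $c(M',\xi')$ is non-zero. The naturality of the LOSS invariant under contact $(-1)$-surgery now enters: the surgery on $L$ induces a map carrying $\loss(L)$ to the contact class $c(M',\xi')$ of the surgered manifold. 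Equivalently, under the Stipsicz--V\'ertesi identification of $\loss(L)$ with the Honda--Kazez--Mati\'c class of the sutured complement, the gluing map induced by filling in the surgery solid torus sends it to $c(M',\xi')$. Since the target is non-zero, $\loss(L)$ cannot vanish.

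I expect the delicate point to be this naturality step, rather than the fillability, which we already have in hand from the previous proof. One must check that the map induced by contact $(-1)$-surgery on $L$ genuinely sends the LOSS class to the contact class of $(M',\xi')$ --- including that the relevant $\mathrm{Spin}^c$-structure and grading conventions match, so that non-vanishing is actually transported --- and that the cancellation lemma pins down the surgered \emph{contact} structure $\xi'$, not merely the underlying smooth manifold. Once this is secured the rest is formal: $\loss(L)\neq 0$ feeds into the Stipsicz--V\'ertesi vanishing theorem to exclude Giroux torsion in $M\setminus L$, and strong exceptionality follows at once.
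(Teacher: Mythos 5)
Your overall architecture---cancel the $(+1)$-surgery by a contact $(-1)$-surgery on the push-off, observe that the resulting closed manifold is fillable, and convert fillability into absence of Giroux torsion in the complement---parallels the paper's, but the conversion step you choose has a genuine gap. The LOSS invariant $\loss(L)$, and with it the Stipsicz--V\'ertesi vanishing theorem \cite[Corollary~1.2]{stve09}, is defined and stated only for \emph{homologically trivial} Legendrian knots, whereas the proposition concerns an arbitrary closed overtwisted $(M,\xi)$ presented by an arbitrary $(\pm 1)$-diagram with one $(+1)$-curve. The push-off $L$ of that curve need not be null-homologous in $M$ (its class can even have infinite order in $H_1(M)$ when $b_1(M)>0$), so $\loss(L)$ need not exist and your contrapositive argument cannot get started in the stated generality. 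Moreover, the step you yourself flag as delicate---that contact $(-1)$-surgery on $L$ relates $\loss(L)$ to $c(M',\xi')$ so that non-vanishing is transported back---is a substantive theorem of Ozsv\'ath--Stipsicz (with the relevant map in fact going from $\widehat{HF}(-M')$ into the knot Floer group, not the other way), not a consequence of the cancellation lemma; as written, your proof defers its real content to an uncited result whose hypotheses again include null-homology.

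The paper's proof sidesteps all of this. Contact $(-1)$-surgery on $L$ yields, by cancellation, the strongly symplectically fillable manifold obtained from the $(-1)$-curves alone; by Gay's theorem, positive Giroux torsion obstructs strong fillability, so this closed manifold has zero Giroux torsion; and since the complement of $L$ in $(M,\xi)$ is precisely the complement of the surgery solid torus and hence a contact submanifold of the surgered manifold, it has zero Giroux torsion as well. No Floer-theoretic input and no homological hypothesis on $L$ is needed. Your route can be salvaged for the null-homologous cases treated later in the paper (where non-vanishing of $\loss$ is indeed the tool used for specific knots), but it does not prove the proposition as stated.
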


\begin{proof}
In the foregoing proof we have seen that
contact $(-1)$-surgery along $L$ produces a strongly symplectically
fillable contact $3$-manifold. As shown by Gay~\cite[Corollary~3]{gay06},
positive Giroux torsion obstructs strong fillability. Thus,
contact $(-1)$-surgery
along $L$ produces a manifold with zero Giroux torsion. In particular,
the complement of $L$ in $(M,\xi)$ must have been of zero Giroux torsion.
\end{proof}
\section{Exceptional knots in $(S^3,\xi_{1/2})$}
\label{section:exceptional-1/2}
By Theorem~\ref{thm:EF}, the contact structure $\xi_{1/2}$ is
distinguished as the only overtwisted contact structure on
$S^3$ containing exceptional realisations of the topological unknot.
The following proposition gives
a sufficient criterion for a topological knot type to admit an exceptional
Legendrian realisation in $(S^3,\xi_{1/2})$.

\begin{prop}
Let $L$ be a Legendrian knot in $(S^3,\xist)$. If contact $(+1)$-surgery
on $L$ produces a tight contact $3$-manifold, then the topological knot
type of $L$ admits an exceptional realisation in $(S^3,\xi_{1/2})$.
\end{prop}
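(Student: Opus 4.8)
The plan is to use the surgical machinery from the previous section. By hypothesis, contact $(+1)$-surgery on $L\subset(S^3,\xist)$ yields a tight contact $3$-manifold. The key observation is that contact $(+1)$-surgery on any Legendrian knot in $(S^3,\xist)$ produces an \emph{overtwisted} contact structure on~$S^3$ precisely when the resulting manifold is $S^3$; but here we must first check that the surgery actually returns $S^3$ as the underlying smooth manifold, not merely some tight $3$-manifold. First I would argue that, since a single contact $(+1)$-surgery along a Legendrian knot corresponds to a topological smooth surgery with a specific framing (one less than the Thurston--Bennequin framing), I need to verify the smooth surgery coefficient. The natural route is: perform contact $(+1)$-surgery on $L$ to obtain a tight contact manifold $(M',\xi')$, then note that $L$ itself, viewed via its push-off, plays the role of the distinguished knot from Proposition~\ref{prop:strongly}.

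The cleaner approach, which I would actually pursue, reverses the construction. Consider contact $(+1)$-surgery on $L$, giving a tight manifold $(M',\xi')$. Inside $(M',\xi')$, consider the Legendrian push-off $L'$ of the surgery curve. By the cancellation lemma (as used in the proof of Theorem~\ref{thm:exceptional-exists}), contact $(-1)$-surgery on $L'$ undoes the $(+1)$-surgery and returns us to $(S^3,\xist)$, in which the image of $L'$ is a Legendrian knot topologically isotopic to~$L$. The crucial point is to run this in the opposite direction so that the single $(+1)$-surgery sits in a diagram. The hard part is ensuring the ambient manifold is $(S^3,\xi_{1/2})$ and not some other contact structure: I would compute that contact $(+1)$-surgery on a single Legendrian knot, followed by nothing else, produces a contact structure whose $d_3$-invariant can be evaluated, and since the complement of the push-off is tight (being contactomorphic to the complement in the original $(S^3,\xist)$ after cancellation, which is manifestly tight), the push-off is exceptional.

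Concretely, the argument runs as follows. Performing contact $(+1)$-surgery on $L$ produces, by hypothesis, a tight contact manifold; call the surgered manifold $(M,\xi)$. By the reasoning in Proposition~\ref{prop:strongly} applied in reverse, I claim $(M,\xi)$ need not itself be the object of interest; rather, I want a diagram with a single $(+1)$-surgery presenting $(S^3,\xi_{1/2})$. Here the essential input is that $(S^3,\xi_{1/2})$ is obtained from $(S^3,\xist)$ by contact $(+1)$-surgery along a Legendrian unknot with $\tb=-1$ (this is the standard surgical description of the overtwisted structure on $S^3$ with $d_3=1/2$). I would combine the diagram for $L$ with the observation that the push-off $L$ of the $(+1)$-surgery curve, sitting in the surgered manifold, is topologically isotopic to the original knot type, since the push-off and the surgery curve are topologically parallel.

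I expect the main obstacle to be bookkeeping the ambient contact structure and its $d_3$-invariant, i.e.\ confirming that the exceptional realisation lands precisely in $\xi_{1/2}$ rather than some other $\xi_d$, and verifying the topological knot type of the push-off is unchanged. The tightness of the complement, which is the defining property of \emph{exceptional}, follows almost formally from the cancellation lemma together with the hypothesis, since contact $(-1)$-surgery on the push-off recovers the tight manifold obtained by the $(+1)$-surgery; the complement of the push-off in $(M,\xi)$ embeds in this tight manifold, hence is tight. The delicate step is therefore the identification $\xi=\xi_{1/2}$, which I would establish by the $d_3$-computation, using $d_3(\xist)=-1/2$ and the surgery formula for $d_3$ under a single contact $(+1)$-surgery.
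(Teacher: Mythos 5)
There is a genuine gap: your construction never produces a Legendrian representative of the knot type of $L$ inside an \emph{overtwisted} $(S^3,\xi_{1/2})$. You perform the contact $(+1)$-surgery on $L$ itself and then consider the push-off of the surgery curve in the resulting manifold $(M',\xi')$. But $(M',\xi')$ is tight by hypothesis, so no knot in it can be exceptional; and for non-trivial $L$ the underlying smooth manifold $M'$ is not even $S^3$ (the smooth coefficient is the integer $\tb(L)+1$, and by Gordon--Luecke no non-trivial surgery on a non-trivial knot returns $S^3$), so the push-off does not represent the knot type of $L$ in $S^3$ at all. Your fallback, that ``$(S^3,\xi_{1/2})$ is obtained from $(S^3,\xist)$ by contact $(+1)$-surgery along a Legendrian unknot with $\tb=-1$'', is false: that surgery has smooth coefficient $\tb+1=0$ and yields the tight $S^1\times S^2$, not an overtwisted $S^3$.

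The idea you are missing is to leave $L$ alone and instead perform \emph{two} contact $(+1)$-surgeries, on a standard Legendrian meridian of $L$ and on the Legendrian push-off of that meridian (Figure~\ref{figure:except-realise1}). Smoothly this is surgery on a Hopf link with framings $0,0$, which returns $S^3$ and does not change the knot type of $L$ (Figure~\ref{figure:except-realise2}); the $d_3$-formula with $q=2$ then gives $d_3=1/2$, so the ambient structure is $\xi_{1/2}$. To see that this copy of $L$ is exceptional, one adds a contact $(-1)$-surgery on $L$ and invokes \cite[Proposition~2]{dige09}: after $(-1)$-surgery on $L$, the standard Legendrian meridian of $L$ is Legendrian isotopic to the Legendrian push-off of $L$, so the three surgeries become a $(-1)$-surgery on $L$ together with $(+1)$-surgeries on two push-offs of $L$, and the cancellation lemma reduces this to a single contact $(+1)$-surgery on $L$ in $(S^3,\xist)$, which is tight by hypothesis. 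Your closing remark that tightness of the complement ``follows almost formally from the cancellation lemma'' is indeed the right punchline, but without the meridional surgeries and the isotopy from \cite[Proposition~2]{dige09} there is nothing for the cancellation lemma to cancel against.
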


\begin{proof}
Given $L\subset (S^3,\xist)$, perform two contact $(+1)$-surgeries
along a standard Legendrian meridian and its push-off as
shown in Figure~\ref{figure:except-realise1}. The Kirby moves in
Figure~\ref{figure:except-realise2}
show that the surgered manifold is again the $3$-sphere, and the topological
knot type of $L$ is not affected by the surgeries. From the formula in
\cite[Corollary~3.6]{dgs04} (see equation~(\ref{eqn:d_3}) below)
it follows that the $d_3$-invariant of
the surgered contact structure equals $1/2$.

\begin{figure}[h]
\labellist
\small\hair 2pt
\pinlabel $L$ at 34 71
\pinlabel $+1$ [l] at 65 28
\pinlabel $+1$ [l] at 65 19
\endlabellist
\centering
\includegraphics[scale=1.4]{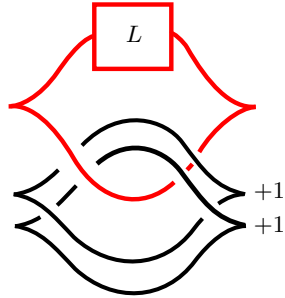}
  \caption{A Legendrian knot $L$ in $(S^3,\xi_{1/2})$.}
  \label{figure:except-realise1}
\end{figure}

\begin{figure}[h]
\labellist
\small\hair 2pt
\pinlabel $L$ at 8 74
\pinlabel $L$ at 114 74
\pinlabel $L$ at 29 18
\pinlabel $L$ at 113 18
\pinlabel $-1$ at 45 73
\pinlabel $0$ [tr] at 68 82
\pinlabel $0$ [bl] at 75 84
\pinlabel $1$ [b] at 158 79
\pinlabel $1$ [tl] at 165 64
\pinlabel $1$ [tl] at 178 69
\pinlabel $-1$ [bl] at 76 21
\endlabellist
\centering
\includegraphics[scale=1.8]{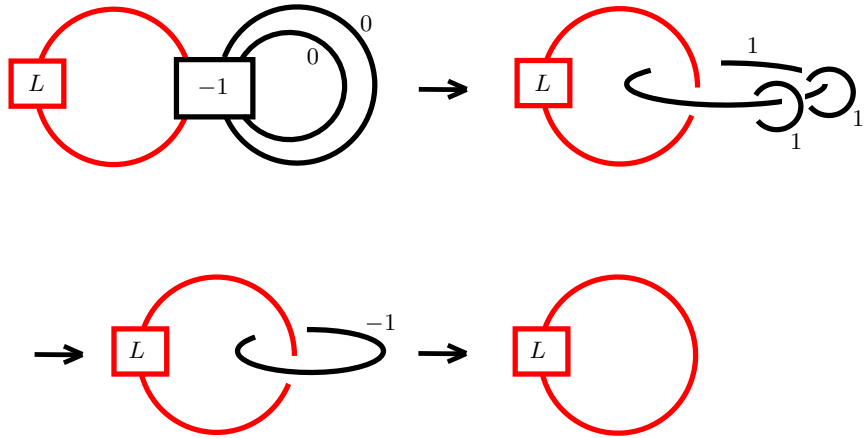}
  \caption{The topological type of $(S^3,L)$ is unchanged by the surgery.}
  \label{figure:except-realise2}
\end{figure}

In order to see that $L\subset (S^3,\xi_{1/2})$ is exceptional,
we perform contact $(-1)$-surgery on $L$ (in addition to the
two $(+1)$-surgeries). By \cite[Proposition~2]{dige09},
in the contact manifold obtained (from any initial
contact manifold) by a contact $(-1)$-surgery along a Legendrian knot~$K$,
the standard Legendrian meridian of $K$ is Legendrian isotopic
to the Legendrian push-off of~$K$. Thus, surgery along the three
knots as described is equivalent to a $(-1)$-surgery along $L$
and $(+1)$-surgeries along two push-offs of $L$. By the
cancellation lemma, this amounts to a single $(+1)$-surgery on~$L$
in $(S^3,\xist)$.

In conclusion, if contact $(+1)$-surgery on $L\subset (S^3,\xist)$ produces a
tight contact $3$-manifold, then so does $(-1)$-surgery
on $L\subset (S^3,\xi_{1/2})$, which proves this latter realisation
of $L$ to be exceptional.
\end{proof}

\begin{ex}
The basic example for this proposition is provided by the topological
unknot. Contact $(+1)$-surgery along its Legendrian realisation in
$(S^3,\xist)$ with $\tb=-1$ produces the tight contact structure
on $S^1\times S^2$.

Further examples are supplied by a theorem of Lisca
and Stipsicz. Let $K$ be a knot in $S^3$ with positive slice
genus $g_s>0$ and maximal Thurston--Bennequin invariant $\otb$
(of Legendrian realisations in $(S^3,\xist)$) equal to $2g_s-1$.
Then by \cite[Theorem~1.1]{list04} and its proof, contact $r$-surgery
with $r\neq 0$ along a Legendrian realisation $L$ of $K$ with
$\tb(L)=\otb$ yields a tight contact structure. This result applies,
for instance, to all positive $(p,q)$-torus knots, $p,q\geq 2$,
whose slice genus equals  $g_s=(p-1)(q-1)/2$, and whose
maximal Thurston--Bennequin invariant is $\otb=pq-p-q=2g_s-1$
by \cite[Theorem~4.1]{etho01}.

In a different context, these Legendrian torus knots were studied in
\cite[Example~4.1.13]{baon15}.
\end{ex}

\section{Exceptional torus knots}
\label{section:exceptional-torus}
In this section we provide some background
for the study of exceptional realisations of torus knots.
\subsection{Seifert fibrations of torus knot complements}
\label{subsection:Seifert}
For given coprime integers $p,q$, consider the $S^1$-action on
$S^3\subset\C^2$ given by
\[ \theta (z_1,z_2)=(\rme^{\rmi p\theta}z_1,\rme^{\rmi q\theta}z_2),
\;\;\;\theta\in S^1=\R/2\pi\Z.\]
This defines a Seifert fibration with two singular fibres
of multiplicity $|p|$ and $|q|$ through the points
$(1,0)$ and $(0,1)$, respectively. All regular fibres are copies
of the $(p,q)$-torus knot.

Choose integers $p',q'$ such that $pq'+p'q=1$. Then, with
the conventions of \cite[Section~2.2]{gela18},
the Seifert invariants are given by
\[ \bigl(g=0;(1,0),(p,p'),(q,q')\bigr),\]
where we include a Seifert pair $(1,0)$, corresponding to
a non-singular fibre, to represent a copy $K$
of the $(p,q)$-torus knot. In particular, the complement of $K$
is Seifert fibred. Notice that by \cite{mose71}, the property of
having a Seifert fibred complement characterises torus knots.

Let $\nu K$ be a closed tubular neighbourhood of $K$ made up of Seifert
fibres. The Seifert fibration on $S^3\setminus\Int(\nu K)$
has base $D^2$ and two singular fibres. 
In terms of the invariants used in \cite[Section~2]{dlz13},
this Seifert fibration is $M(D^2;-p'/p,-q'/q)$; this follows
by comparing the conventions there with those of
\cite[Section~2.2]{gela18}.

Write $\mu$ for the meridian on the boundary $\partial(\nu K)$ of the
tubular neighbourhood; as a longitude $\lambda$ we choose a
parallel Seifert fibre, so that the linking number between
$K$ and $\lambda$ is $\lk(K,\lambda)=pq$.
On the complement $M(D^2;-p'/p,-q'/q)$, the meridian $\mu$
is identified with $-\partial D^2\times\{1\}$; the longitude $\lambda$
corresponds to an $S^1$-fibre $\{*\}\times S^1$ for some
point $*\in\partial D^2$.

In \cite{dlz13}, the authors determine the number of tight
contact structures on the Seifert fibred manifold
$M(D^2;-p'/p,-q'/q)$ with minimal convex boundary
of slope~$s$ and zero Giroux torsion along the boundary, for a certain
range of permissible slopes. `Minimality' of the boundary
means that there are two dividing curves. The slope of the
dividing curves is measured with
respect to the basis $(\mu,\lambda)$ (under the above identifications).

Now let $L\subset (S^3,\xi)$ be a Legendrian realisation of
the $(p,q)$-torus knot $K$ in some contact structure $\xi$ on
the $3$-sphere. The boundary of a standard tubular neighbourhood
of $L$ is minimal convex. Write $\lambdas,\lambdac$ for the curves
on $\partial(\nu L)$ representing the surface framing (i.e.\
the framing provided by a Seifert surface) and the
contact framing, respectively. From $\lk(K,\lambda)=pq$ we have
\[ \lambdas=\lambda-pq\mu.\]
Hence
\[ \lambdac=\lambdas+\tb(L)\mu=\lambda+(\tb(L)-pq)\mu.\]
So the slope $s$ is given by
\[ s=\frac{1}{\tb(L)-pq}.\]
\subsection{Two families of torus knots}
We now consider two families of torus knots, where, as we shall see,
the classification results of Ding--Li--Zhang~\cite{dlz13} about tight
contact structures on certain Seifert fibred manifolds with
torus boundary apply,
and exceptional realisations can be described in surgery diagrams.
\subsubsection{Positive torus knots}
\label{subsubsection:positive}
We first look at positive $(p,q)$-torus knots, where $p$ is a natural number
greater than or equal to~$2$, and $q=np+1$ for some $n\in\N$.
This means that $p\cdot (-n)+1\cdot q=1$, so in
the notation of Section~\ref{subsection:Seifert} we have
$p'=1$ and $q'=-n$. Hence, we would like to determine the number of tight
contact structures on
\[ M\Bigl(D^2;-\frac{1}{p},\frac{n}{np+1}\Bigr)\;\;\;\text{of boundary slope}
\;\;\;s=\frac{1}{\tb(L)-pq}.\]
(The condition `zero Giroux torsion along the boundary' will be
understood from now on.)
By \cite[Proposition 2.2]{dlz13}, this is the same as the number of tight
structures on
\begin{equation}
\label{eqn:positive}
M\Bigl(D^2;\frac{p-1}{p},\frac{n}{np+1}\Bigr)\;\;\;\text{of boundary slope}
\;\;\;s=\frac{1}{\tb(L)-pq}+1.
\end{equation}
The Seifert invariant $r_1:=(p-1)/p$ lies in the interval
$[\nicefrac{1}{2},1)$, the invariant $r_2:=n/(np+1)$, in $(0,\nicefrac{1}{2})$.
It follows that the only case of the classification in \cite{dlz13} that
applies is their case (1), where the concrete values of the Seifert
invariants $r_1,r_2$ (in the range $(0,1)\cap\Q$) are irrelevant,
but the slope $s$ has to satisfy
\begin{equation*}
\tag{DLZ1} s\in (-\infty,0)\cup[2,\infty).
\end{equation*}
The slope in equation (\ref{eqn:positive}) lies in $[0,2]\cup\{\infty\}$,
so the only case where the classification applies is
\[ \tb(L)=pq+1=np^2+p+1,\]
when $s=2$.

\subsubsection{Negative torus knots}
\label{subsubsection:negative}
We consider negative $(p,q)$-torus knots with $p\geq 2$
and $q=-(np-1)$ for some $n\geq 2$.
We have $p\cdot n+1\cdot q=1$,
so that $p'=1$ and $q'=n$. This requires us to find the tight
contact structures on
\[ M\Bigl(D^2;-\frac{1}{p},\frac{n}{np-1}\Bigr)\;\;\;\text{of boundary slope}
\;\;\;s=\frac{1}{\tb(L)-pq},\]
or on
\begin{equation}
\label{eqn:negative}
M\Bigl(D^2;\frac{p-1}{p},\frac{n}{np-1}\Bigr)\;\;\;\text{of boundary slope}
\;\;\;s=\frac{1}{\tb(L)-pq}+1.
\end{equation}
The value
\[ \tb(L)=pq+1=-np^2+p+1\]
gives us $s=2$, so we are again in the case (DLZ1), with no
restriction on the Seifert invariants.

The Seifert invariant $r_1:=(p-1)/p$ lies in the interval
$[\nicefrac{1}{2},1)$, and for $p\geq 3$ the Seifert invariant
$r_2:=n/(np-1)$, in the
interval $(0,\nicefrac{1}{2})$. This means that no other case of
the classification in \cite{dlz13} applies.

For $p=2$ we have $1/2<r_2<1$ for all~$n$, so we are in case (2)
of~\cite{dlz13}:
\begin{equation*}
\tag{DLZ2}
r_1,r_2\in [\nicefrac{1}{2},1)\;\;\;\text{and}\;\;\;s\in[0,1).
\end{equation*}
With equation (\ref{eqn:negative}) the slope condition $s\in[0,1)$
translates into $\tb(L)<-p(np-1)$.
\section{Exceptional left-handed trefoils}
\label{section:lht}
The discussion in the previous section suggests as potentially
worthwhile the study of exceptional left-handed trefoils,
i.e.\ $(2,-3)$-torus knots, subject to the condition
$\tb(L)=-5$ or $\tb(L)<-6$. The aim of this section is to prove
Theorem~\ref{thm:lht}, which shows that here a
complete classification is indeed possible.

As we shall see in the course of
the proof, explicit realisations of the two exceptional
left-handed trefoils with $\tb<-6$ are given as follows.
The left-handed trefoil $L$ in Figure~\ref{figure:lht-tb-6},
taken from~\cite{geon18}, is strongly exceptional with $\tb(L)=-6$. With
the clockwise orientation it has
$\rot(L)=-7$. Its negative stabilisations are strongly exceptional,
with $(\tb,\rot)=(-6-k,-7-k)$. The second strongly exceptional
realisation is given by reversing the orientation, so that
$(\tb,\rot)=(-6-k,7+k)$.

\begin{figure}[h]
\labellist
\small\hair 2pt
\pinlabel $L$ [l] at 74 20
\pinlabel $+1$ [l] at 74 28
\pinlabel $+1$ [l] at 74 35
\pinlabel $-1$ [l] at 74 55
\pinlabel $-1$ [l] at 74 77
\pinlabel $-1$ [l] at 63 99
\endlabellist
\centering
\includegraphics[scale=1.5]{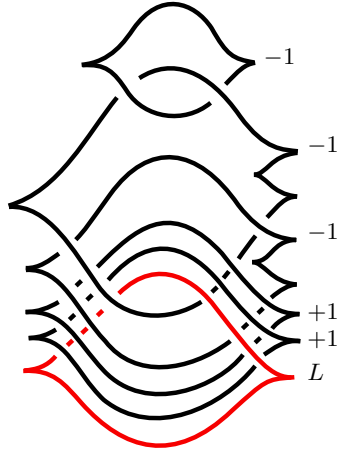}
  \caption{An exceptional left-handed trefoil $L$ with $\tb=-6$.}
  \label{figure:lht-tb-6}
\end{figure}

\subsection{Number of tight structures on the knot complement}
\label{subsection:number}
In the notation of Section~\ref{subsubsection:negative}
we take $p=2$ and $n=2$, so that $q=-(np-1)=-3$.
The knot complement of a Legendrian $(2,-3)$-torus knot $L$
is the Seifert manifold $M(D^2;\nicefrac{1}{2},\nicefrac{2}{3})$, with
boundary slope $s=1+1/(\tb(L)+6)$.
\subsubsection{The case $\tb(L)=-5$}
\label{subsubsection:tb-5}
Here we have slope $s=2$. Following the algorithm of \cite[p.~65]{dlz13},
we write the number $s-[s]$ as $b/a$ with integers $a>b\geq 0$,
so we may take $a=1, b=0$, and then set
\[ a_1=\frac{1}{1-b/a}+1=2,\;\;\; a_2=1.\]
Moreover, by \cite[p.~68]{dlz13}, we have to write down negative
continued fraction expansions of the $-1/r_i$, $i=1,2$. For $r_1=1/2$
this gives us $a_0^1=-2$. For $r_2=2/3$ we have
\[ -\frac{3}{2}=-2-\frac{1}{-2},\]
that is, $a_0^2=a_1^2=-2$.
By formula ($***$) on p.~75 of~\cite{dlz13}, there are exactly
\[ [s]\prod_{i,j}|a_j^i+1|(a_1-1)a_2=2\]
tight contact structures.
\subsubsection{The case $\tb(L)<-6$}
\label{subsubsection:tb<-6}
For $\tb(L)=-6-k$, $k\in\N$, the corresponding slope $s=s_k$ is given by
\[ s=\frac{1}{-k}+1=\frac{k-1}{k}.\]
Now, according to \cite[Theorem~1.1]{dlz13}, the number of tight
contact structures, up to isotopy fixing the boundary, on
$M(D^2;\nicefrac{1}{2},\nicefrac{2}{3})$ equals the number of tight
contact structures, up to isotopy, on the small Seifert manifold
\[ M\Bigl(-1-[s];\frac{1}{2},\frac{2}{3},r_3\Bigr),\]
where the Seifert invariant $r_3$ is determined as follows.

Choose integers $a>b\geq 0$ such that
\[ \frac{b}{a}=s-[s]=\frac{k-1}{k}.\]
This means $a=k$, $b=k-1$. Then
\[ \frac{1}{1-b/a}=k\]
is an integer, so the recipe of \cite[p.~65]{dlz13} again tells us to set
\[ a_1=\frac{1}{1-b/a}+1=k+1,\;\;\; a_2=1,\]
and to define
\[ r_3=\cfrac{1}{a_1-\cfrac{1}{a_2+1}}=\frac{2}{2k+1}.\]

The number of tight contact structures on the Seifert manifold
\[ M\Bigl(-1;\frac{1}{2},\frac{2}{3},\frac{2}{2k+1}\Bigr)\]
has been determined in~\cite[Theorem~1.1]{gls07}.
The formula given there is elementary but quite involved, so we
leave it to the reader to check our calculation, which gives
$k+4$ tight contact structures. Beware that the calculations
in \cite{gls07} presume that the Seifert invariants are ordered in size,
so one needs to work with
\[ (r_1,r_2,r_3)=(\nicefrac{2}{3},\nicefrac{1}{2},\nicefrac{2}{2k+1})\]
for $k\geq 2$, and with
\[ (r_1,r_2,r_3)=(\nicefrac{2}{3},\nicefrac{2}{3},\nicefrac{1}{2})\]
for $k=1$.
\subsection{Realisations in the standard contact structure}
The Legendrian realisations of the left-handed trefoil knot in
$(S^3,\xist)$ have been classified by Etnyre and Honda~\cite{etho01}.
Here is a paraphrase of their Theorems 4.3 and~4.4.

\begin{thm}[Etnyre--Honda]
\label{thm:EH}
The Legendrian realisations $L$ of the left-handed trefoil knot
in $(S^3,\xist)$ are determined, up to Legendrian isotopy,
by their classical invariants in the range
\[ \tb(L)=-6-k,\;\; k\in\N_0,\]
and, correspondingly,
\[ \rot(L)\in\{-(k+1),-(k-1),\ldots,k-1,k+1\}.\]
\end{thm}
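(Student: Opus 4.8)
The plan is to follow the convex surface theory strategy of Etnyre and Honda, splitting the classification into three tasks: (i) pin down the maximal Thurston--Bennequin invariant $\otb=-6$; (ii) enumerate the Legendrian representatives realising each admissible pair $(\tb,\rot)$; and (iii) organise all representatives into a stabilisation tree. For (i) I would use that the $(2,-3)$-torus knot sits on the standard Heegaard torus $T\subset S^3$ splitting $S^3=V_1\cup_T V_2$ into two solid tori. After making $T$ convex, the Legendrian realisations of largest contact framing appear as ruling curves or Legendrian divides on~$T$, and their contact framing is computed from the dividing set $\Gamma_T$. Tightness of $(S^3,\xist)$ restricted to each complementary solid torus $V_i$ constrains the admissible slope of $\Gamma_T$ (via Honda's classification of tight structures on solid tori), and this forces $\tb(L)\le-6$; exhibiting a ruling curve of the correct slope realises the equality. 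Note that this bound is genuinely finer than any naive genus bound, since the slice--Bennequin inequality only gives $\tb+|\rot|\le 1$ for the trefoil.

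For the classification proper, I would realise an arbitrary representative $L$, up to Legendrian isotopy, as a ruling curve on a convex copy of~$T$, and then analyse the two tight solid tori $V_1,V_2$ together with the tight contact structure on $S^3\setminus\nu L$. The rotation number is read off from the relative Euler class, which is governed by the sign choices in the continued-fraction (bypass) expansions associated with the dividing slopes on $\partial V_i$. Honda's solid-torus classification then yields a finite list of possibilities at each level $\tb(L)=-6-k$, and a direct count of the admissible sign sequences should produce exactly the $k+2$ rotation numbers $-(k+1),-(k-1),\ldots,(k+1)$; a quick parity check confirms that these all satisfy $\tb+\rot$ odd, as they must.

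Uniqueness --- that the classical invariants determine $L$ up to Legendrian isotopy --- would be established by a destabilisation argument: every representative with $\tb(L)=-6-k$ and $k\ge 1$ is shown, by locating a bypass in the tight complement, to be a stabilisation of a representative one level up, so that the entire family is generated from the $\tb=-6$ peak by positive and negative stabilisations (each changing $(\tb,\rot)$ by $(-1,\pm1)$). I expect this step to be the main obstacle. One must verify both directions carefully: that no two distinct bypass/sign data give the same $(\tb,\rot)$ while producing non-isotopic knots, and conversely that every combinatorial possibility is geometrically realised. This is precisely where the delicate analysis of admissible state transitions, together with the non-existence of spurious bypasses forced by tightness, is required, and where the commutation of the two stabilisation operations must be checked.

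As an independent sanity check I would recompute the count through the Seifert-fibred picture of Section~\ref{subsection:Seifert}: the complement of $L$ is $M(D^2;\nicefrac{1}{2},\nicefrac{2}{3})$ with boundary slope $s=1+1/(\tb(L)+6)$ (cf.\ Section~\ref{subsection:number}), and the standard-tight realisations correspond to those tight contact structures on this manifold whose gluing to $\nu L$ reproduces $(S^3,\xist)$. The number of such structures, as organised by the Ding--Li--Zhang enumeration, ought to be consistent with the rotation-number count above; combined with the exceptional count of Theorem~\ref{thm:lht} this should also be compatible with Theorem~\ref{thm:EF} as a further cross-check.
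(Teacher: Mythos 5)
First, a point of reference: the paper does not prove this statement. Theorem~\ref{thm:EH} is quoted as a paraphrase of Theorems 4.3 and 4.4 of Etnyre--Honda \cite{etho01}, so the paper's ``proof'' is the citation. Your proposal is, in outline, a faithful reconstruction of the Etnyre--Honda argument itself: place the knot on a convex Heegaard torus, bound the twisting by the dividing set, invoke Honda's classification of tight contact structures on solid tori, read off $\rot$ from relative Euler classes, and organise everything by destabilisation. The individual facts you invoke are correct: $\otb=-6$ for the $(2,-3)$-torus knot, $g_s=1$ so slice--Bennequin only gives $\tb+|\rot|\le 1$, the parity of $\tb+\rot$, and stabilisation changing $(\tb,\rot)$ by $(-1,\pm1)$.

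As a proof, however, the proposal has a genuine gap, which you yourself flag: the two steps carrying all the content are not executed. (1) The upper bound $\tb\le-6$ and the peak values $\rot=\pm1$ require the full analysis of which dividing slopes can occur on a convex torus splitting $(S^3,\xist)$ into two tight solid tori, together with the relative Euler class computation; saying that ``tightness constrains the admissible slope'' states the lemma rather than proving it. (2) More importantly, the count of $k+2$ (rather than $2(k+1)$) representatives at level $\tb=-6-k$, and hence the claim that the classical invariants \emph{classify}, rests on the identification $S_-L_+=S_+L_-$ of the mixed stabilisations of the two peak representatives $L_\pm$ with $\rot(L_\pm)=\pm1$; this ``valleys fill in'' step is the hardest part of \cite{etho01} and is only named, not proved, in your sketch. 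Without it one obtains only that the invariants take values in the stated range. Finally, your Seifert-fibred sanity check is of limited use: the Ding--Li--Zhang enumeration applies only when the slope $s=1+1/(\tb(L)+6)$ lies in the permissible range (so for $\tb=-5$ or $\tb<-6$, and not at the peak $\tb=-6$), and in the paper's logic Theorem~\ref{thm:EH} is an input to Theorem~\ref{thm:lht}, so invoking the latter as a cross-check would be circular.
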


This means that for $\tb(L)=-6-k$ there are $k+2$ distinct realisations.
\subsection{The LOSS invariant}
\label{subsection:loss}
In \cite{loss09}, Lisca \emph{et al.}\ introduced an invariant
$\loss$ for oriented, homologically trivial Legendrian knots $L$
in an arbitrary closed contact $3$-manifold $(M,\xi)$, taking
values in certain Heegaard Floer homology groups, with the following
properties:
\begin{itemize}
\item[(i)] If $\loss(L)\neq 0$ and $\xi$ is overtwisted,
then $L$ is exceptional
\cite[Theorem~1.4]{loss09}.
\item[(ii)] The negative stabilisation $S_-L$ of $L$ has
the same LOSS invariant:\\
$\loss(S_-L)=\loss(L)$ \cite[Theorem~1.6]{loss09}.
\end{itemize}

Moreover, in \cite[Theorem~6.8]{loss09} they established that
the Legendrian knot $L$ in Figure~\ref{figure:lht-tb-6}
with the clockwise orientation has $\loss(L)\neq 0$. The rotation number
of $L$ is found with the formula in \cite[Lemma~6.6]{loss09}
to be $\rot(L)=-7$. (See equation~(\ref{eqn:rot}) below for this formula.)

With the properties (i) and (ii) of $\loss$ we conclude that the
negative stabilisations of $L$ give us exceptional Legendrian
left-handed trefoils with $(\tb,\rot)$ taking the values
$(-6-k,-7-k)$, $k\in\N$. Reversing the orientation of these knots
gives exceptional trefoils with $(\tb,\rot)=(-6-k,7+k)$.

\begin{rem}
The vanishing theorem \cite[Corollary~1.2]{stve09}
guarantees that a (homologically trivial)
Legendrian knot $L$ with $\loss(L)\neq 0$ is in fact
\emph{strongly} exceptional.
\end{rem}
\subsection{Proof of Theorem~\ref{thm:lht}}
\label{subsection:proof-lht}
For $\tb(L)=-6-k$, $k\in\N$, we found in Section~\ref{subsubsection:tb<-6}
that there are at most $k+4$ Legendrian realisations with
zero Giroux torsion in the complement. Theorem~\ref{thm:EH}
gives us $k+2$ realisations in~$\xist$, so there can be at most
two strongly exceptional realisations. These are the two described in
Section~\ref{subsection:loss} and detected by the LOSS invariant.

The examples in Section~\ref{subsection:loss} include the case $\tb=-6$
(as shown in Figure~\ref{figure:lht-tb-6}).

\begin{rem}
From our discussion we can conclude that the knot $-L$, i.e.\
the knot in Figure~\ref{figure:lht-tb-6} with the
counter-clockwise orientation has $\loss(-L)=0$. For otherwise
the negative stabilisations of $-L$ would likewise be strongly exceptional.
This would give more strongly exceptional realisations than allowed by
the arithmetic in Section~\ref{subsection:number}. For instance,
for $\tb=-7$ there can be at most five Legendrian realisations
of the left-handed trefoil with complement having zero Giroux torsion.
The three realisations in $\xist$ have $\rot\in\{0,\pm 2\}$.
The exceptional knots $\pm S_-L$ in $\xi_{3/2}$ have $\rot=\pm 8$. The knot
$S_-(-L)=-S_+L$ has $\rot=6$, so it must be distinct from the
other five, and hence loose.
\end{rem}

For $\tb(L)=-5$ there can be at most two strongly
exceptional realisations of the left-handed trefoil by
the calculation in Section~\ref{subsubsection:tb-5}.
Explicit realisations, as we shall explain, are given in
Figure~\ref{figure:lht-tb-5}
(with the two choices of orientation for~$L$).

\begin{figure}[h]
\labellist
\small\hair 2pt
\pinlabel $-1$ [br] at 13 36
\pinlabel $-1$ [b] at 50 38
\pinlabel $L$ [br] at 70 39
\pinlabel $+1$ [tl] at 98 5
\pinlabel $-1$ [bl] at 125 40
\pinlabel $K$ [tr] at 73 5
\endlabellist
\centering
\includegraphics[scale=1.4]{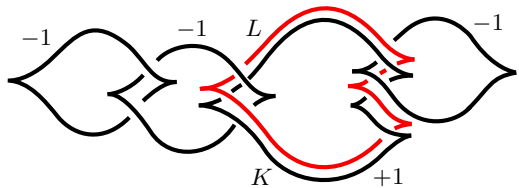}
  \caption{The two exceptional left-handed trefoils $L$ with $\tb=-5$.}
  \label{figure:lht-tb-5}
\end{figure}

For the other values of $\tb(L)$, examples of strongly
exceptional realisations are given in Figure~\ref{figure:lht-tbgeq-5}.
Here $m$ denotes the number of Legendrian unknots with $\tb=-1$
in the vertical chain;
notice that for $m=0$ this figure specialises to
Figure~\ref{figure:lht-tb-5}.

\begin{figure}[h]
\labellist
\small\hair 2pt
\pinlabel $-1$ [br] at 13 144
\pinlabel $-1$ [b] at 50 147
\pinlabel $-1$ [b] at 78 143
\pinlabel $-1$ [bl] at 117 136
\pinlabel $-1$ [tl] at 95 109
\pinlabel $-1$ [l] at 101 70
\pinlabel $-1$ [l] at 101 53
\pinlabel $L$ [l] at 105 39
\pinlabel $+1$ [l] at 105 14
\pinlabel $m$ [r] at 36 91
\pinlabel $K$ [tr] at 59 14
\endlabellist
\centering
\includegraphics[scale=1.5]{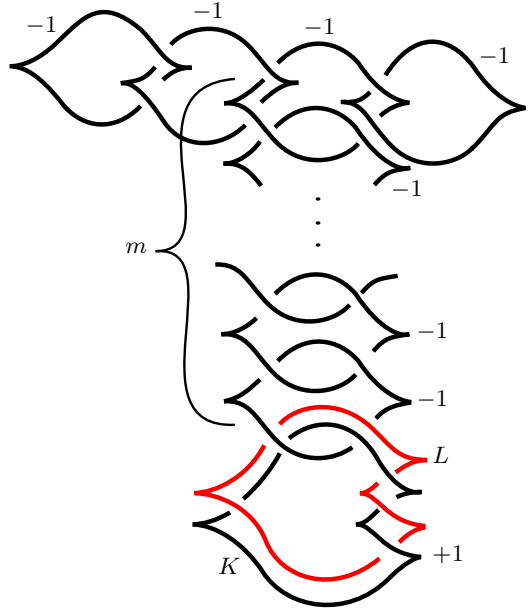}
  \caption{Exceptional left-handed trefoils with $\tb=m-5$.}
  \label{figure:lht-tbgeq-5}
\end{figure}

\begin{lem}
\label{lem:lht}
The knot $L$ in the surgery diagram of Figure~\ref{figure:lht-tbgeq-5} ---
including the case $m=0$, shown separately in Figure~\ref{figure:lht-tb-5} ---
is a left-handed trefoil in the $3$-sphere.
\end{lem}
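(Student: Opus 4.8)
The claim is purely topological: the coefficients $\pm 1$ in Figures 9 and 10 are contact surgery coefficients, but to identify the underlying smooth knot type we only need to track the topological (smooth) framings and perform Kirby calculus. So the plan is to first convert the contact surgery diagram into a topological surgery diagram, then simplify by Kirby moves until $L$ sits visibly as a left-handed trefoil.

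Let me sketch the steps. First I would replace each contact surgery coefficient by its topological counterpart: a contact $(\pm 1)$-surgery on a Legendrian knot with Thurston–Bennequin invariant $\tb$ corresponds to a topological surgery with framing $\tb\pm 1$. For each of the Legendrian unknots with $\tb=-1$ in the diagram, a contact $(-1)$-surgery becomes a topological $(-2)$-framed surgery, and the single contact $(+1)$-surgery becomes a topological $0$-framed surgery. (I would read off each $\tb$ from the Legendrian front in the figure; the unknots have $\tb=-1$, and I would compute $\tb(K)$ similarly from its front.) This produces an honest integral surgery diagram in $S^3$ in which $L$ is an unknot linked with a chain of surgery curves.

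Next I would run Kirby calculus to eliminate all the surgery components while keeping $L$ as the distinguished knot whose type we wish to identify. The vertical chain of $(-2)$-framed unknots, together with the $0$-framed curve $K$ carrying the $(+1)$-contact surgery, is designed so that the $0$-framed surgery on $K$ can be used to absorb or slide $L$ across, turning the trivial linking pattern into the crossing pattern of a trefoil. Concretely, I expect to blow down the $\pm 1$-framed unknots and to use the $0$-framed handle slides to convert the clasps in the chain into the three crossings of the trefoil; the sign of the resulting crossings (determined by the signs of the framings) is what forces the \emph{left}-handed, rather than right-handed, trefoil. I would verify the $m=0$ base case of Figure 9 first, then observe that inserting each additional $(-2)$-framed unknot in the chain corresponds to an isotopy that does not alter the final knot type, giving the general $m$ case by the same reduction.

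The main obstacle is bookkeeping rather than conceptual: keeping the strand $L$ correctly threaded through the diagram while performing the handle slides and blow-downs, so that at the end $L$ is literally displayed as a left-handed trefoil with the correct chirality. In particular, the delicate point is confirming the \emph{handedness}: an error in a framing sign or in the direction of a handle slide would produce the right-handed trefoil instead. I would therefore pin down the signs carefully at the step where the $0$-framed surgery on $K$ is converted into crossings, since that is where the chirality is determined, and cross-check against the known value $\tb=m-5$ asserted in the figure caption, which must be consistent with the maximal Thurston–Bennequin invariant $\otb=-6$ of the left-handed trefoil from Theorem~\ref{thm:EH}.
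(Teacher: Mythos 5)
Your overall strategy is the same as the paper's: forget the contact structure, replace each contact coefficient by the smooth framing $\tb\pm1$, and then run Kirby calculus, treating $m=0$ as the base case and reducing general $m$ to it. But as written the plan contains a step that would fail outright. You assert that the single contact $(+1)$-surgery curve $K$ becomes a \emph{$0$-framed} unknot. In Figure~\ref{figure:lht-tb-5} the front of $K$ has $\tb(K)=-2$ (this is also visible in the linking matrix $M$ in the proof of Lemma~\ref{lem:lht-rot}, whose diagonal entry for $K$ is $-1$), so its topological framing is $-1$, not $0$. This is not a cosmetic slip: with a $0$-framed $K$ the linking matrix has determinant $-7$ rather than $-1$, so the surgered manifold would not even be $S^3$, and no sequence of handle slides could then exhibit $L$ as a trefoil in the $3$-sphere. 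The $-1$-framing is also exactly what drives the paper's key move, which your sketch leaves unspecified: since $\lk(L,K)=-2$, one blows up twice with $(+1)$-framed meridians of $K$ to make $L$ a $0$-linking parallel of $K$ (changing $K$'s framing to $+1$), slides $L$ over $K$ to turn it into a meridian of $K$, and blows back down; the remaining moves identifying the left-handed trefoil are then quoted from \cite{geon18}.

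Two further points. First, passing from $m$ to $m=0$ is not an isotopy: in the paper one blows down the now $-1$-framed $K$, which raises the framing of the adjacent chain unknot from $-2$ to $-1$, and this cascade of $m$ blow-downs is what collapses the vertical chain; the framings have to be tracked, and they do change the picture at each stage. Second, your proposed sanity check is backwards: $\tb(L)=m-5\geq-5$ \emph{exceeds} $\otb=-6$ from \cite[Theorem~4.1]{etho01}, and this is consistent only because $L$ lives in an overtwisted contact structure, where the Bennequin-type bound does not apply. If you actually imposed consistency with $\otb=-6$ you would wrongly conclude the construction is impossible. The chirality should instead be pinned down by the Kirby calculus itself (or, as the paper does, by matching the final diagram with the one in \cite{geon18}).
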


\begin{proof}
The knot $L$ has linking $-2$ with
the surgery curve $K$ at the bottom of the picture, whose
topological surgery framing is~$-1$. By blowing up twice, so that we
place two $(+1)$-framed meridians around~$K$, 
we can turn $L$ into a parallel curve of $K$ (with zero linking),
and the surgery framing of $K$ has changed to~$+1$. A handle slide of $L$
over $K$ will then turn $L$ into a meridian of~$K$,
unlinked from the next surgery curve and the two $(+1)$-framed
meridians. Blowing down the
two meridians will return the old surgery framing $-1$ of $K$; the
resulting situation is shown in Figure~\ref{figure:lht-Kirby}.

\begin{figure}[h]
\labellist
\small\hair 2pt
\pinlabel $L$ [br] at 2 20
\pinlabel $-1$ [b] at 18 21
\pinlabel $-2$ [b] at 37 23
\pinlabel $-2$ [b] at 51 23
\pinlabel $-2$ [b] at 91 23
\pinlabel $-2$ [tl] at 96 2
\pinlabel $-2$ [b] at 103 25
\pinlabel $-2$ [bl] at 118 24
\pinlabel $K$ [t] at 20 11
\endlabellist
\centering
\includegraphics[scale=1.9]{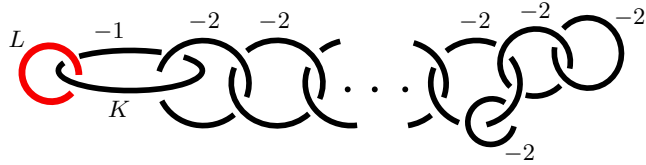}
  \caption{The topological picture for Figures
           \ref{figure:lht-tb-5} and~\ref{figure:lht-tbgeq-5}.}
  \label{figure:lht-Kirby}
\end{figure}

For $m=0$, this is the same picture as in~\cite[Figure~3]{geon18}.
For $m\geq 1$, blowing down $K$ will increase the surgery framing of the
next unknot in the chain to~$-1$. We continue in this fashion
with $m-1$ further blow-downs until we reach the same
picture as in the case $m=0$.
The further Kirby moves that turn this into the picture
of a left-handed trefoil in $S^3$
are shown in Figures 3 and~4 of~\cite{geon18}.
\end{proof}

\begin{lem}
\label{lem:lht-rot}
The Legendrian knot $L$ in Figure~\ref{figure:lht-tbgeq-5} has
$\tb(L)=m-5$ and, depending on a choice of orientation, $\rot(L)=\pm(m-6)$.
\end{lem}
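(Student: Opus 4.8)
The plan is to extract both classical invariants directly from the contact
surgery diagram of Figure~\ref{figure:lht-tbgeq-5} by means of the standard
surgery formulas. First I would fix notation for the surgery link: the $m$
Legendrian unknots in the vertical chain (each with $\tb=-1$ and contact
coefficient $-1$, hence topological framing $-2$), the curve $K$ carrying the
single $(+1)$-surgery, and the remaining $(-1)$-framed components. With these
I would write down the linking matrix $M$ of the surgery link, the vector
$\ell$ of linking numbers of $L$ with each component (in particular
$\lk(L,K)=-2$, exactly as exploited in the proof of Lemma~\ref{lem:lht}), and
the vector $\rho$ collecting the rotation numbers of the surgery curves. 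Since
the surgered manifold is again $S^3$, the matrix $M$ is unimodular and $L$ is
genuinely null-homologous, so $a:=M^{-1}\ell$ automatically has integer entries.

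For the Thurston--Bennequin invariant the cleanest route, matching the
Kirby-calculus already performed in Lemma~\ref{lem:lht}, is to track the contact
(equivalently blackboard) framing of $L$ through those same moves: they identify
$L$ with a left-handed trefoil in a copy of $S^3$, and following the framing down
to the Seifert framing of that trefoil reads off $\tb(L)$. Equivalently one may
use the surgery correction
\[ \tb(L)=\tb_0(L)-\ell^{\mathsf{T}}M^{-1}\ell,\]
where $\tb_0(L)$ is the Thurston--Bennequin invariant of the front in
$(S^3,\xist)$. Either way I expect the answer $\tb(L)=m-5$, the term $m$ reflecting
the $m$ successive blow-downs along the chain, each of which contributes $+1$ to
$\tb(L)$.

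For the rotation number I would invoke the surgery formula for $\rot$ in contact
$(\pm1)$-surgery diagrams, namely equation~(\ref{eqn:rot}) (the formula of
\cite[Lemma~6.6]{loss09}), which here reads
\[ \rot(L)=\rot_0(L)-\langle\rho,M^{-1}\ell\rangle=\rot_0(L)-\langle\rho,a\rangle,\]
with $\rot_0(L)$ the rotation number computed from the front. Reversing the
orientation of $L$ negates $\rot_0(L)$ and sends $\ell\mapsto-\ell$, hence
$a\mapsto-a$, so the whole expression changes sign; this is precisely the source
of the two signs in $\rot(L)=\pm(m-6)$. Note that $\tb$, being quadratic in
$\ell$, is unaffected by this sign change, consistent with $\tb(L)=m-5$ for both
orientations. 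The constant $-6$ is pinned down by the base case $m=0$ of
Figure~\ref{figure:lht-tb-5}, and the linear growth in $m$ again stems from the
chain.

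The main obstacle is the explicit $m$-dependence of the linear algebra: the chain
of $m$ unknots contributes a large tridiagonal block to $M$, so evaluating
$\ell^{\mathsf{T}}M^{-1}\ell$ and $\langle\rho,a\rangle$ amounts to solving a
three-term recursion in~$m$. The tidiest way to handle this --- paralleling the
telescoping blow-downs in the proof of Lemma~\ref{lem:lht} --- is induction on
$m$: verify the base case $m=0$ by direct inspection of
Figure~\ref{figure:lht-tb-5}, and then show that appending one further
$\tb=-1$ unknot to the top of the chain, i.e.\ performing one additional
blow-down, raises $\tb(L)$ by $1$ and changes $\rot(L)$ by $\pm1$. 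This reduces
the $m$-dependent matrix inversion to a single step of framing-and-rotation
bookkeeping, which is routine once the base case is in hand.
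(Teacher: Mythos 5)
Your proposal follows essentially the same route as the paper: both invariants are read off from the surgery diagram via the formulas of \cite[Lemma~6.6]{loss09} (equations (\ref{eqn:tb}) and (\ref{eqn:rot}), noting that $\det M_0/\det M=-\ell^{\ttt}M^{-1}\ell$), and the $\tb$-computation is reduced to the base case $m=0$ by exactly the blow-down/framing argument the paper uses. The only divergence is in handling the $m$-dependence of $\rot(L)$: the paper exhibits the explicit solution of the tridiagonal system $M\bfx=(1,0,\dots,0)^{\ttt}$ in Section~\ref{section:computations} rather than inducting on $m$, and you should be aware that your inductive step for $\rot$ cannot be justified by the blow-down alone (rotation numbers are not topological invariants of the Kirby move), so it still comes down to the three-term recursion you correctly identify.
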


\begin{proof}
Although the final topological picture does not change
by adding Legendrian unknots with $\tb=-1$ and contact surgery
coefficient $-1$ to the vertical chain in Figure~\ref{figure:lht-tbgeq-5},
each of these unknots, when it is blown down, increases the
framing of $L$ by one.
It therefore suffices to show that $\tb(L)=-5$ for the
Legendrian knot $L$ shown in Figure~\ref{figure:lht-tb-5}, which can be
done with the formula from \cite[Lemma~6.6]{loss09},
cf.~\cite[Lemma~3.1]{geon15} and \cite{duke16}.

Let $M$ be the linking matrix of the surgery diagram in
Figure~\ref{figure:lht-tb-5}. When we order the surgery knots
as $L_1,\ldots,L_4$ from
left to right and orient them clockwise, this linking matrix is
\[ M=\left(\begin{array}{cccc}
-2 &  1 &  0 &  0\\
 1 & -2 &  1 &  0\\
 0 &  1 & -1 &  1\\
 0 &  0 &  1 & -2
\end{array}\right).\]
The extended linking matrix $M_0$ is defined by including $L$ as the
first knot in the diagram, with self-linking number set to zero:
\[ M_0=\left(\begin{array}{ccccc}
 0 &  0 &  1 & -2 &  1\\
 0 & -2 &  1 &  0 &  0\\
 1 &  1 & -2 &  1 &  0\\
-2 &  0 &  1 & -1 &  1\\
 1 & 0 &  0 &  1 & -2
\end{array}\right).\]
Write $\tb_0$ for the Thurston--Bennequin invariant of $L$ as
a knot in the unsurgered copy of~$S^3$. Then the formula from
\cite{loss09} for the Thurston--Bennequin invariant of the
left-handed trefoil knot $L$ in the surgered copy of $S^3$ gives
\begin{eqnarray}
\label{eqn:tb}
\tb(L) & = & \tb_0+\frac{\det M_0}{\det M}\\
       & = & -2+\frac{3}{-1}\;=\;-5.\nonumber
\end{eqnarray}

For the rotation number $\rot(L)$ of $L$ in the surgered $S^3$ we also
have a formula from~\cite{loss09}; cf.~\cite{duke16}.
Write $\rot_0$ for the rotation number of $L$ in the unsurgered copy
of~$S^3$. Then
\begin{eqnarray}
\label{eqn:rot}
\rot(L) & = & \rot_0-\left\langle\left(\begin{array}{c}
              \rot(L_1)\\ \vdots\\ \rot(L_4)\end{array}\right),
              M^{-1}\left(\begin{array}{c}
              \lk(L,L_1)\\ \vdots\\ \lk(L,L_4)\end{array}\right)
              \right\rangle\\
        & = & 1-\left\langle\left(\begin{array}{c}
              0\\0\\1\\0\end{array}\right),
              M^{-1}\left(\begin{array}{c}
              0\\1\\-2\\1\end{array}\right)\right\rangle\nonumber\\
        & = & \langle(0,0,1,0)^{\ttt},(-2,-4,-5,-3)^{\ttt}\rangle
              \;\; = \;\; 1+5\;\; = \;\; 6.\nonumber
\end{eqnarray}
The computation of the rotation number for the general case shown in
Figure~\ref{figure:lht-tbgeq-5} is analogous, see
Section~\ref{subsection:compute-lht}.
\end{proof}

Notice that for $m=6$, when $\tb(L)=1$,
we have $\rot(L)=0$, so we cannot distinguish
the two orientations of~$L$.

The lacuna in the
classification of tight contact structures on the trefoil
complement prevents us from concluding that the examples
for $m\geq 1$ constitute
a comprehensive list in the range~$\tb>-5$.

The next lemma says that $L$ lives in an overtwisted contact
structure, and as the push-off of the single
contact $(+1)$-surgery curve, it must then be
strongly exceptional by Proposition~\ref{prop:strongly}.
This completes the proof of Theorem~\ref{thm:lht}.

\begin{lem}
\label{lem:lht-d3}
The Legendrian knot $L$ in Figure~\ref{figure:lht-tbgeq-5}
lives in the overtwisted contact structure $\xi_{3/2}$.
\end{lem}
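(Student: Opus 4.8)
The plan is to compute the three-dimensional homotopy invariant $d_3$ of the ambient contact structure directly from the surgery diagram and to show that it equals $3/2$. By Lemma~\ref{lem:lht} the underlying manifold is $S^3$, and the only tight contact structure on $S^3$ has $d_3=-1/2$; since we shall find $d_3=3/2\neq-1/2$, the structure is overtwisted, and the classification of overtwisted contact structures on $S^3$ then identifies it as $\xi_{3/2}$. The computation uses the surgery formula~(\ref{eqn:d_3}) of Ding--Geiges--Stipsicz, whose ingredients are the linking matrix $M$ of the surgery link, its signature $\sigma(M)$, the class $c$ determined by the rotation numbers of the surgery curves (with $c^2=\vrot^{\ttt}M^{-1}\vrot$), and the number $q$ of contact $(+1)$-surgeries.

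I would begin with the base case $m=0$ of Figure~\ref{figure:lht-tb-5}, reusing the linking matrix $M$ already recorded in the proof of Lemma~\ref{lem:lht-rot}. Here $q=1$ (the single $(+1)$-surgery on $K$), the signature is $\sigma(M)=-2$, and $\det M=-1$, consistent with the identification of the surgered manifold as $S^3$ and ensuring that~(\ref{eqn:d_3}) applies. The surgery curves are three $\tb=-1$ unknots, which contribute rotation number $0$, together with $K$, for which $\rot(K)=1$; thus $\vrot=(0,0,1,0)^{\ttt}$ and $c^2=(M^{-1})_{33}=6$. Substituting these data into~(\ref{eqn:d_3}) gives $d_3=3/2$.

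For general $m$ the diagram of Figure~\ref{figure:lht-tbgeq-5} adds a chain of $m$ further $\tb=-1$ unknots, each entering with contact coefficient $-1$, i.e.\ as a $-2$-framed leaf. These components leave $q=1$ unchanged and contribute only zero entries to the rotation vector, so that $c^2=\bigl(M_m^{-1}\bigr)_{KK}$ for the extended linking matrix $M_m$, the diagonal entry at the position of $K$. The assertion is that $d_3$ is independent of $m$, and the cleanest route is induction: adjoining one more $-2$-framed leaf lowers $\sigma$ by one, raises the number of surgery components by one, and---this is the key arithmetic fact---lowers $c^2$ by exactly one; in~(\ref{eqn:d_3}) these three changes cancel, so the value $3/2$ persists for every $m$.

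The main obstacle is precisely this last step: controlling how the diagonal entry $\bigl(M_m^{-1}\bigr)_{KK}$ of the inverse evolves as the chain lengthens, and simultaneously confirming $|\det M_m|=1$ throughout, so that $M_m$ continues to present $S^3$ and the formula remains applicable. Both quantities satisfy short recursions in $m$ coming from the tridiagonal-plus-leaf shape of $M_m$, so the verification is routine but lengthy; it is exactly the kind of calculation the paper relegates to Section~\ref{section:computations}.
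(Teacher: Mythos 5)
Your proposal is correct and follows essentially the same route as the paper: apply formula~(\ref{eqn:d_3}) with $q=1$, $\sigma=-2-m$, $\chi=m+5$, and $c^2$ computed from $M\bfx=\vrot$, obtaining $d_3=3/2$ for $m=0$ and observing that the value persists for all $m$. The only cosmetic difference is that you organise the general case as an induction on the chain length, whereas Section~\ref{subsection:compute-lht} of the paper verifies the key fact $c^2=6-m$ (your ``drops by one per leaf'') directly by exhibiting the explicit solution $\bfx$ of $M\bfx=\vrot$; either way the verification is the routine linear algebra you identify.
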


\begin{proof}
We carry out the computation for the case $m=0$ shown in
Figure~\ref{figure:lht-tb-5}; for the general case see
Section~\ref{subsection:compute-lht}.
Read as a Kirby diagram, there are four $2$-handles in this figure,
so it describes a $2$-handlebody $X$ of Euler characteristic $\chi(X)=5$.
The signature of this $4$-manifold $X$ is $\sigma(X)=-2$. This can be computed
as the signature of the linking matrix~$M$; better, it can be
determined by keeping track of the (positive or negative) blow-ups
during the Kirby moves. By \cite[Corollary~3.6]{dgs04}, the
$d_3$-invariant of the contact structure $\xi$ described by the
surgery diagram is then given by
\begin{equation}
\label{eqn:d_3}
d_3(\xi)=\frac{1}{4}\bigl(c^2-3\sigma(X)-2\chi(X)\bigr)+q,
\end{equation}
where $q$ denotes the number of contact $(+1)$-surgeries (here $q=1$),
and $c\in H^2(X;\Z)$ is the cohomology class which evaluates
as $\rot(L_i)$ on the homology generator of $X$ defined
by a Seifert surface of the surgery knot $L_i$ glued with the core disc in
the corresponding $2$-handle. Write $\vrot$
for the vector of rotation numbers of the~$L_i$. Then
the square $c^2\in\Z$ is found as $c^2=\bfx^{\ttt}M\bfx$, where $\bfx$
is a solution of the linear equation $M\bfx=\vrot$.

In our situation, we have $\vrot=(0,0,1,0)^{\ttt}$
and $\bfx=(2,4,6,3)^{\ttt}$. This gives
$c^2=\bfx^{\ttt}\vrot=6$ and
\[ d_3(\xi)=\frac{1}{4}(6+6-10)+1=3/2,\]
that is, $\xi=\xi_{3/2}$.
\end{proof}
\section{Exceptional right-handed trefoils}
\label{section:rht}
We now consider Legendrian realisations $L$ of
the right-handed trefoil, i.e.\ the $(2,3)$-torus
knot. We show that any value of $\tb$ can be realised by
an exceptional right-handed trefoil, and we give a complete
classification for one value of~$\tb$.

By Section~\ref{subsubsection:positive}, for $\tb(L)=7$ the knot complement
is the Seifert manifold
\[ M\Bigl(D^2;\frac{1}{2},\frac{1}{3}\Bigr)\;\;\;\text{of boundary slope}
\;\;\; s=2.\]
Computing as in Section~\ref{subsubsection:tb-5}, where now
$-1/r_2=-3$ gives us $a_0^2=-3$, one finds that there are
at most four strongly exceptional realisations.

\begin{prop}
There are exactly four strongly exceptional realisations $L$ of the
right-handed trefoil knot with $\tb(L)=7$. They are shown in
Figure~\ref{figure:rht-tb7} (with either orientation of~$L$).
\end{prop}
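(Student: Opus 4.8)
The plan is to supply the lower bound matching the upper bound of four obtained just above, thereby pinning the count at exactly four. Concretely, I would show that the two surgery diagrams of Figure~\ref{figure:rht-tb7}, each read with both orientations of~$L$, produce four pairwise non-coarsely-equivalent strongly exceptional right-handed trefoils with $\tb(L)=7$. The whole argument runs parallel to the treatment of the left-handed trefoil in Section~\ref{section:lht}, so I would organise it as the analogues of Lemmas~\ref{lem:lht}, \ref{lem:lht-rot} and~\ref{lem:lht-d3}.

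First I would check the topological type. As in the proof of Lemma~\ref{lem:lht}, the knot $L$ links the bottom contact $(+1)$-surgery curve $K$; blowing up to place $(+1)$-framed meridians around $K$, sliding $L$ over $K$, and blowing back down reduces each diagram by Kirby calculus to a standard picture of the $(2,3)$-torus knot in $S^3$. Since the surgeries leave the topological type of $L$ unchanged, $L$ is a right-handed trefoil in each case.

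Next I would compute the classical invariants from the LOSS formulas. Writing out the linking matrix $M$ and the extended matrix $M_0$ of each diagram, formula~(\ref{eqn:tb}) gives $\tb(L)=7$, while formula~(\ref{eqn:rot}) yields the rotation numbers. The decisive point is that the two diagrams produce distinct positive rotation numbers $r\neq r'$ with $|r|\neq|r'|$; reversing orientation negates each, so the four realisations carry the four distinct values $\pm r,\pm r'$. As $\rot$ is invariant under coarse equivalence, this already shows the four knots are pairwise non-coarsely-equivalent. Applying the $d_3$-formula~(\ref{eqn:d_3}), with $c^2=\bfx^{\ttt}\vrot$ for $\bfx$ solving $M\bfx=\vrot$, gives a half-integer $d_3(\xi)\neq-1/2$, so each $L$ lives in an overtwisted contact structure on $S^3$. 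Since each $L$ is the push-off of the single contact $(+1)$-surgery curve in its diagram, Proposition~\ref{prop:strongly} certifies that it is \emph{strongly} exceptional.

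Combining these steps, the four exhibited knots are strongly exceptional realisations with $\tb(L)=7$ and are pairwise distinct; as the upper bound is four, they exhaust all of them. I anticipate the main obstacle to be the rotation-number bookkeeping: one must verify that the two companion diagrams genuinely yield $|r|\neq|r'|$ rather than coinciding, which would collapse the count below four. The complementary concern---that the four tight structures on the complement counted by Ding--Li--Zhang are actually realised, not merely bounded---is clean here, because $\tb(L)=7$ exceeds the maximal Thurston--Bennequin invariant $\otb=1$ of the right-handed trefoil in $(S^3,\xist)$, so none of the four complements can arise from a standard tight realisation; all four are therefore accounted for by the overtwisted examples constructed above.
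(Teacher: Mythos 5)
Your proposal is correct and follows essentially the same route as the paper: Kirby moves as in Lemma~\ref{lem:lht} to identify the topological type, the formulas (\ref{eqn:tb}), (\ref{eqn:rot}) and (\ref{eqn:d_3}) to get $\tb=7$, the four distinct rotation numbers (the paper finds $\pm4$ and $\pm8$, in $\xi_{1/2}$ and $\xi_{-3/2}$ respectively), and Proposition~\ref{prop:strongly} for strong exceptionality, matched against the Ding--Li--Zhang upper bound of four. Your closing observation that $\otb=1<7$ rules out realisations in $\xist$, so the bound of four is borne entirely by strongly exceptional knots, is exactly the point the paper uses implicitly.
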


\begin{figure}[h]
\labellist
\small\hair 2pt
\pinlabel $-1$ [br] at 18 67
\pinlabel $-1$ [br] at 129 67
\pinlabel $-1$ [bl] at 75 41
\pinlabel $-1$ [bl] at 180 41
\pinlabel $+1$ [tl] at 48 6
\pinlabel $+1$ [tl] at 153 6
\pinlabel $L$ [br] at 13 33
\pinlabel $L$ [br] at 118 33
\endlabellist
\centering
\includegraphics[scale=1.4]{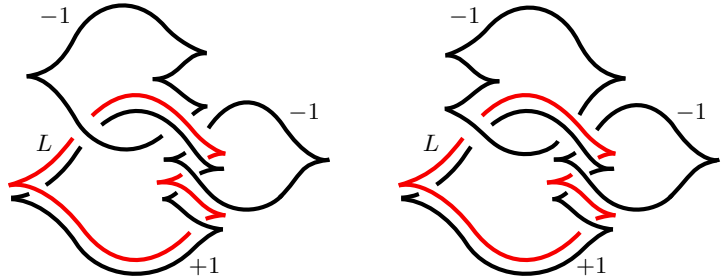}
  \caption{The four right-handed exceptional trefoils with $\tb=7$.}
  \label{figure:rht-tb7}
\end{figure}

\begin{proof}
Topologically, the surgery diagram consists of a chain of
three unknots with surgery coefficients $-3, -1, -2$. After thrice
blowing down a $(-1)$-curve we obtain the $3$-sphere. This also
shows that the handlebody $X$ described by the diagram has
signature $\sigma(X)=-3$.

The Kirby moves for showing that $L$ is topologically a right-handed
trefoil in the surgered $S^3$ are analogous to those
in the proof of Lemma~\ref{lem:lht}.

Straightforward computations yield the following classical invariants
and $d_3$-invariant.
The Legendrian realisation $L$ on the left in Figure~\ref{figure:rht-tb7}
has $(\tb,\rot)=(7,\pm 4)$; the one on the right, $(\tb,\rot)=(7,\pm 8)$.
The overtwisted contact structure of the surgered $S^3$ on the left is
$\xi_{1/2}$; on the right, $\xi_{-3/2}$.
\end{proof}

Although we have no tools yet to classify exceptional right-handed
trefoils $L$ with $\tb(L)\neq 7$, we can say something about their existence
for all values of~$\tb$.

An example with $(\tb,\rot)=(6,\pm 7)$ in $(S^3,\xi_{-3/2})$
has been described in~\cite[Figure~9]{loss09}. With one
of its orientations, this has non-zero LOSS invariant, so by taking
negative stabilisations (and then either orientation) we obtain
a pair of exceptional right-handed trefoils for all values of
$\tb\leq 6$. By putting the two zigzags of the stabilised knot
in \cite[Figure~9]{loss09} to the other side, or one zigzag on
either side, one obtains exceptional right-handed trefoils
in $(S^3,\xi_{1/2})$ with $(\tb,\rot)=(6,\pm 1)$ and $(6,\pm 3)$,
respectively. We do not know, however, whether
these new examples have non-zero LOSS invariant.

The first examples with $\tb>7$ are shown in Figure~\ref{figure:rht-tbgeq7}.
For $m=0$ these examples reduce to those in Figure~\ref{figure:rht-tb7},
and the same argument as for the left-handed trefoils shows why
the Thurston--Bennequin invariant takes the value $m+7$.
The computation of the other invariants is completely standard, and we only
give the results in Table~\ref{table:rht}. We summarise our findings in the
following proposition.

\begin{prop}
Any integer can be realised as the Thurston--Bennequin invariant of
a strongly exceptional Legendrian realisation of the
right-handed trefoil. All the known examples for $\tb\leq 5$ live
in $\xi_{-3/2}$. For each $\tb\geq 6$, there are examples
in $\xi_{-3/2}$ and~$\xi_{1/2}$.
\qed
\end{prop}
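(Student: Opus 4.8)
The plan is to assemble the proposition from the explicit realisations already produced, organised according to the value of $\tb(L)$, so that every integer is hit and both of the advertised overtwisted structures appear in the range $\tb\geq 6$. The value $\tb=7$ is settled by the preceding proposition, which exhibits four strongly exceptional realisations, a pair in $\xi_{1/2}$ (with $\rot=\pm 4$) and a pair in $\xi_{-3/2}$ (with $\rot=\pm 8$), drawn in Figure~\ref{figure:rht-tb7}. It then remains to treat the two half-lines $\tb\leq 6$ and $\tb\geq 8$.

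For $\tb\leq 6$ I would take as a seed the Legendrian right-handed trefoil of \cite[Figure~9]{loss09}, which lives in $\xi_{-3/2}$ with $(\tb,\rot)=(6,\pm 7)$ and, for one orientation, has $\loss\neq 0$. Property~(i) of the LOSS invariant then makes it exceptional, and the Stipsicz--V\'ertesi vanishing result \cite[Corollary~1.2]{stve09} upgrades this to strongly exceptional. Property~(ii) propagates $\loss\neq 0$ through every negative stabilisation; since stabilisation fixes the ambient contact structure while decreasing $\tb$ by one, this realises all $\tb\leq 6$ in $\xi_{-3/2}$ (two per value, by reversing orientation), which also justifies the assertion that the known low-$\tb$ examples sit in $\xi_{-3/2}$. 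To supply the companion $\xi_{1/2}$ examples at $\tb=6$, I would re-choose the stabilising zigzags in the diagram of \cite[Figure~9]{loss09} --- placing both on the opposite side, or one on either side --- and recompute the $d_3$-invariant by formula~(\ref{eqn:d_3}); this yields realisations with $(\tb,\rot)=(6,\pm 1)$ and $(6,\pm 3)$ whose ambient structure is now $\xi_{1/2}$, strong exceptionality again following from the surgical criterion of Proposition~\ref{prop:strongly}.

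For $\tb\geq 8$ I would use the one-parameter surgery family of Figure~\ref{figure:rht-tbgeq7}, where $m\geq 1$ counts the $\tb=-1$ unknots in the vertical chain. The three ingredients run exactly parallel to the left-handed case. A Kirby-calculus argument in the spirit of Lemma~\ref{lem:lht} --- blowing up twice, sliding $L$ over the bottom surgery curve $K$, and blowing down the chain --- identifies $L$ as a right-handed trefoil and shows that each blow-down raises the framing, so that $\tb(L)=m+7$. The rotation number and the $d_3$-invariant are then read off from the extended linking matrix via the formulas~(\ref{eqn:rot}) and~(\ref{eqn:d_3}), the outcome being collected in Table~\ref{table:rht}; these display, for each $\tb\geq 8$, representatives in both $\xi_{-3/2}$ and $\xi_{1/2}$. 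Finally, because each diagram contains a single contact $(+1)$-surgery of which $L$ is the push-off, Proposition~\ref{prop:strongly} certifies that $L$ is strongly exceptional. Combining the three ranges gives the existence statement for every integer and the claim that both $\xi_{-3/2}$ and $\xi_{1/2}$ occur whenever $\tb\geq 6$.

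I expect the main obstacle to be the uniform-in-$m$ evaluation of the $d_3$-invariant for the family of Figure~\ref{figure:rht-tbgeq7}: the linking matrix $M$ and its extension $M_0$ grow with $m$, and one must solve $M\bfx=\vrot$ and evaluate $c^2=\bfx^{\ttt}M\bfx$ so as to verify that the two distinct values $\xi_{-3/2}$ and $\xi_{1/2}$ genuinely persist for all $\tb\geq 8$ rather than collapsing to a single structure. A conceptual subtlety, rather than a computation, accounts for the asymmetry in the statement: the $\xi_{1/2}$ realisations at $\tb=6$ are known to be strongly exceptional only through Proposition~\ref{prop:strongly}, and their LOSS invariant is undetermined, so they cannot be stabilised downwards with any guarantee of remaining strongly exceptional --- which is precisely why only $\xi_{-3/2}$ examples are claimed for $\tb\leq 5$.
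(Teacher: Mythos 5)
Your proposal is correct and follows essentially the same route as the paper: $\tb=7$ from the preceding proposition, $\tb\leq 6$ via the LOSS example of \cite[Figure~9]{loss09} together with properties (i)--(ii) of $\loss$ and the Stipsicz--V\'ertesi vanishing theorem, the $\xi_{1/2}$ examples at $\tb=6$ by repositioning zigzags, and $\tb\geq 8$ from the family of Figure~\ref{figure:rht-tbgeq7} with the invariants of Table~\ref{table:rht} and strong exceptionality supplied by Proposition~\ref{prop:strongly}. Your closing remark correctly identifies why the statement only claims $\xi_{-3/2}$ for $\tb\leq 5$, matching the paper's caveat that the LOSS invariant of the repositioned examples is unknown.
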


\begin{figure}[h]
\labellist
\small\hair 2pt
\pinlabel $(\rma)$ at 0 167
\pinlabel $L$ [l] at 64 38
\pinlabel $+1$ [l] at 64 14
\pinlabel $-1$ [l] at 61 53
\pinlabel $-1$ [l] at 61 70
\pinlabel $-1$ [tl] at 53 109
\pinlabel $-1$ [bl] at 77 135
\pinlabel $-1$ [br] at 36 144
\pinlabel $-1$ [bl] at 47 163
\pinlabel $m$ [r] at 0 91
\pinlabel $(\rmb)$ at 131 167
\pinlabel $L$ [l] at 197 38
\pinlabel $+1$ [l] at 197 14
\pinlabel $-1$ [l] at 194 53
\pinlabel $-1$ [l] at 194 70
\pinlabel $-1$ [tl] at 187 109
\pinlabel $-1$ [bl] at 210 135
\pinlabel $-1$ [bl] at 171 144
\pinlabel $-1$ [br] at 155 163
\pinlabel $m$ [r] at 131 91
\endlabellist
\centering
\includegraphics[scale=1.4]{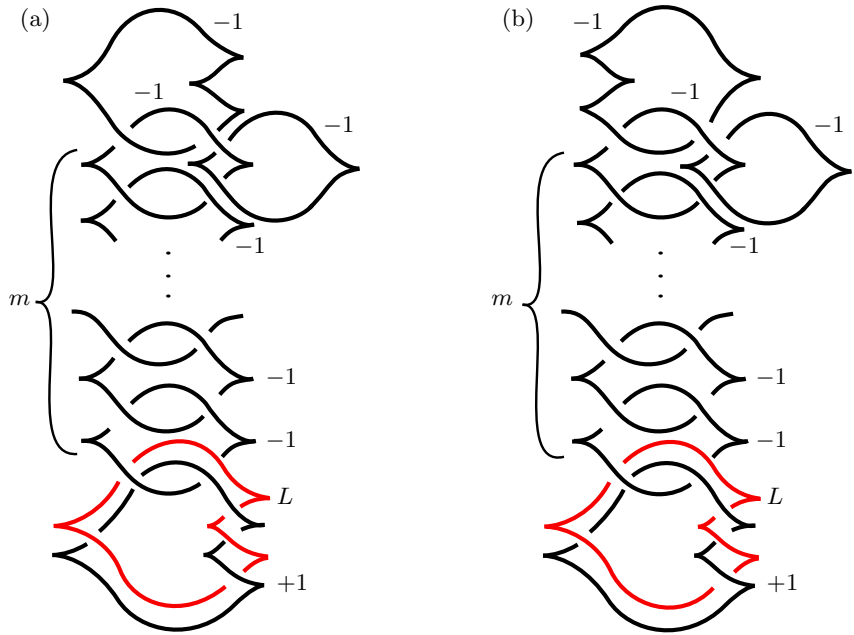}
  \caption{Right-handed exceptional trefoils with $\tb>7$.}
  \label{figure:rht-tbgeq7}
\end{figure}

\begin{table}
{\renewcommand{\arraystretch}{1.6}
\begin{tabular}{|c|c|c|c|c|}  \hline
Figure                     & $m$  & $\tb$ & $\rot$     & $d_3$ \\ \hline
\ref{figure:rht-tbgeq7}(a) & odd  & $m+7$ & $\pm(m+1)$ & $-3/2$ \\ \hline
\ref{figure:rht-tbgeq7}(a) & even & $m+7$ & $\pm(m-3)$ & $1/2$  \\ \hline
\ref{figure:rht-tbgeq7}(b) & odd  & $m+7$ & $\pm(m-3)$ & $1/2$  \\ \hline
\ref{figure:rht-tbgeq7}(b) & even & $m+7$ & $\pm(m+1)$ & $-3/2$  \\ \hline
\end{tabular}
}
\vspace{1.5mm}
\caption{Invariants of the right-handed trefoils in
Figure~\ref{figure:rht-tbgeq7}.}
\label{table:rht}
\end{table}

\section{General torus knots}
\label{section:general}
In this section we prove two classification results for
strongly exceptional realisations of general torus knots.

\begin{prop}
\label{prop:positive}
For $p\geq 2$ and $n\geq 1$,
there are exactly $2p$ strongly exceptional Legendrian realisations $L$
of the $(p,np+1)$-torus knot with $\tb(L)=np^2+p+1$. They are shown
in Figure~\ref{figure:pnp-plus-1} (with either orientation of~$L$),
where $k,l\in\N_0$ with $k+l=p-1$.
The rotation number of these knots is
\[ \rot(L)=\pm\bigl(np^2+p-np(l-k)\bigr).\]
The ambient overtwisted contact structure $\xi$ is determined by
\[ d_3(\xi)=\frac{n}{4}\bigl(1-(p-l+k)^2\bigr)+\frac{1}{2}.\]
\end{prop}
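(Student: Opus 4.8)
The plan is to bracket the number of realisations from above by the Ding--Li--Zhang count on the complement and from below by the explicit family in Figure~\ref{figure:pnp-plus-1}, and then to match the two. For the upper bound, recall from Section~\ref{subsubsection:positive} that when $\tb(L)=np^2+p+1$ the complement is $M(D^2;(p-1)/p,\,n/(np+1))$ with boundary slope $s=2$, which is governed by case~(DLZ1). Following the algorithm used in Section~\ref{subsubsection:tb-5}, $s=2$ gives $[s]=2$, $a_1=2$ and $a_2=1$. The negative continued fraction expansion of $-1/r_1=-p/(p-1)$ consists of $p-1$ entries, all equal to $-2$, while that of $-1/r_2=-(np+1)/n$ consists of the entries $-(p+1),-2,\ldots,-2$ (one $-(p+1)$ followed by $n-1$ copies of $-2$). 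Hence $\prod_{i,j}|a_j^i+1|=1^{\,p-1}\cdot|-p|\cdot 1^{\,n-1}=p$, and formula~$(***)$ of~\cite{dlz13} yields $[s]\prod_{i,j}|a_j^i+1|(a_1-1)a_2=2\cdot p\cdot 1\cdot 1=2p$ tight contact structures of zero Giroux torsion on the complement. By \cite[Theorem~4.1]{etho01} the maximal Thurston--Bennequin invariant of the $(p,np+1)$-torus knot in $(S^3,\xist)$ is $\otb=pq-p-q=np^2-np-1$, which is strictly smaller than $np^2+p+1$; therefore, when any of these tight structures is reglued, the ambient $3$-sphere is forced to be overtwisted, so none of them comes from a realisation in $\xist$. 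We conclude that there are \emph{at most} $2p$ strongly exceptional realisations with $\tb(L)=np^2+p+1$.

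For the lower bound I would produce the surgery diagrams of Figure~\ref{figure:pnp-plus-1}, indexed by $k,l\in\N_0$ with $k+l=p-1$ and by a choice of orientation. A Kirby-calculus argument entirely analogous to the proof of Lemma~\ref{lem:lht} --- blowing down the chain of $(-1)$-framed unknots --- identifies $L$ topologically as the $(p,np+1)$-torus knot. Each diagram contains a single contact $(+1)$-surgery, and $L$ is the Legendrian push-off of its surgery curve; thus, as soon as the ambient structure is shown to be overtwisted, Proposition~\ref{prop:strongly} certifies that $L$ is strongly exceptional.

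It then remains to evaluate the invariants and to check that the $2p$ examples are pairwise distinct. Using the formulas~(\ref{eqn:tb}), (\ref{eqn:rot}) and~(\ref{eqn:d_3}) applied to the linking matrix of Figure~\ref{figure:pnp-plus-1} --- a computation I would carry out in Section~\ref{section:computations}, in parallel with the trefoil case --- one obtains $\tb(L)=np^2+p+1$, the stated rotation number $\rot(L)=\pm(np^2+p-np(l-k))$, and, after substituting $l=p-1-k$ so that $p-l+k=2k+1$,
\[ d_3(\xi)=\frac{n}{4}\bigl(1-(2k+1)^2\bigr)+\frac{1}{2},\qquad k=0,\ldots,p-1.\]
These $p$ values of $d_3$ are pairwise distinct and never equal to $-1/2$ (indeed $d_3=-1/2$ would force $nk(k+1)=1$, which is impossible), so the $p$ diagrams live in $p$ mutually distinct overtwisted contact structures. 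For fixed $k$ the two orientations give opposite, non-zero rotation numbers, hence distinct coarse-equivalence classes. Altogether this exhibits $2p$ pairwise distinct strongly exceptional realisations, which together with the upper bound of the first paragraph shows there are exactly $2p$.

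The main obstacle is the bookkeeping rather than any conceptual difficulty: I expect the delicate points to be (i) confirming the two negative continued fraction expansions, so that formula~$(***)$ evaluates to exactly $2p$, which is what makes the upper bound sharp, and (ii) extracting the closed-form expressions for $\rot(L)$ and $d_3(\xi)$ from the $(p,n)$-dependent linking matrix of Figure~\ref{figure:pnp-plus-1} in Section~\ref{section:computations}. The passage from the count of tight structures on the complement to the count of strongly exceptional realisations is unproblematic here precisely because $\tb(L)$ lies above the standard range, so that no Legendrian realisation in $\xist$ competes for any of the $2p$ tight structures.
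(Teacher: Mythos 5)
Your proposal follows essentially the same route as the paper: the upper bound of $2p$ comes from the Ding--Li--Zhang formula with exactly the continued fraction expansions $-1/r_1=[-2,\ldots,-2]$ and $-1/r_2=[-(p+1),-2,\ldots,-2]$ you give, the lower bound from the diagrams of Figure~\ref{figure:pnp-plus-1} identified topologically by the same Kirby moves and certified strongly exceptional via Proposition~\ref{prop:strongly}, with the invariant computations deferred to Section~\ref{section:computations}. The only (immaterial) difference is that you distinguish the $2p$ examples by the $p$ distinct $d_3$-values together with the sign of $\rot$, whereas the paper observes directly that the $2p$ rotation numbers $\pm p(1+n(2k+1))$ are pairwise distinct; both arguments are correct.
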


\begin{figure}[h]
\labellist
\small\hair 2pt
\pinlabel $-1$ [l] at 59 16
\pinlabel $-1$ [l] at 59 23
\pinlabel $+1$ [l] at 59 32
\pinlabel $L$ [l] at 59 56 
\pinlabel $-1$ [bl] at 42 111 
\pinlabel $-1$ [tr] at 40 106
\pinlabel $k$ [r] at 3 92
\pinlabel $l$ [l] at 59 92
\pinlabel $n$ [r] at 30 117
\pinlabel $p-1\biggl\{\biggr.$ [r] at 11 19
\endlabellist
\centering
\includegraphics[scale=1.8]{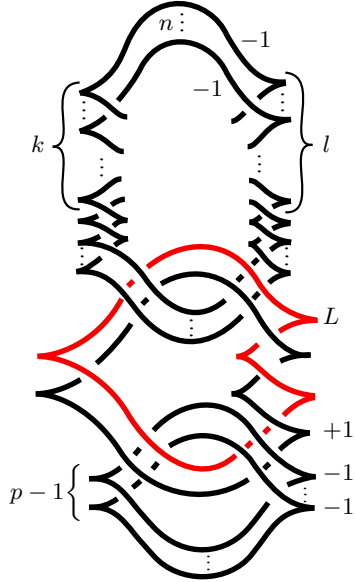}
  \caption{The $2p$ exceptional $(p,np+1)$-torus knots with
           $\tb=np^2+p+1$.}
  \label{figure:pnp-plus-1}
\end{figure}

\begin{proof}
In the notation of Section~\ref{subsubsection:positive}
we have Seifert invariants $r_1=(p-1)/p$ and $r_2=n/(np+1)$,
and boundary slope $s=2$. With the continued fraction expansions
\[ -\frac{1}{r_1}=[\underbrace{-2,\ldots,-2}_{p-1}]\;\;\;\text{and}\;\;\;
-\frac{1}{r_2}=[-(p+1),\underbrace{-2,\ldots,-2}_{n-1}] \]
the formula in Section~\ref{subsubsection:tb-5} tells us
there are $2p$ contact structures of zero Giroux torsion on the knot
complement. So we need only verify that Figure~\ref{figure:pnp-plus-1} does
indeed show $2p$ distinct strongly exceptional realisations of the
$(p,np+1)$-torus knot.

Topologically, we transform the diagram by the same procedure
as that used at the beginning of Lemma~\ref{lem:lht}.
This produces a Kirby diagram as in \cite[Figure~18]{onar10},
and the further Kirby moves pictured there demonstrate that
$L$ is a $(p,np+1)$-torus knot in~$S^3$.

The classical invariants of $L$ and the $d_3$-invariant of
the ambient contact structure are computed in
Section~\ref{subsection:compute+}.
The $2p$ realisations are distinguished by the rotation number.
\end{proof}

\begin{rem}
We take the opportunity to point out a minor correction to
Figure~18 of~\cite{onar10}. The $n-1$ meridians with surgery
framing $-2$ ought to link one another $-1$ times.
\end{rem}

\begin{prop}
\label{prop:p-np-1}
For $p\geq 2$ and $n\geq 2$, there are exactly $2(p-1)(n-1)$ strongly
exceptional realisations $L$ of the $(p,-(np-1))$-torus knot
with $\tb(L)=-np^2+p+1$. They are shown in Figure~\ref{figure:p-np-1}
(with either orientation of~$L$), where $k,l,u,v\in\N_0$
with $k+l=p-2$ and $u+v=n-2$. The rotation number of these knots is
\[ \rot(L)=\pm\bigl(np^2-p-np(l-k)+p(v-u)\bigr).\]
The ambient overtwisted contact structure $\xi$ is determined by
\[ d_3(\xi)=\frac{1}{4}\bigl(n(p-l+k)^2+2(p-l+k)(v-u)\bigr)-\frac{1}{2}.\]
\end{prop}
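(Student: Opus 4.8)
The plan is to follow the template of Proposition~\ref{prop:positive}: first extract a sharp upper bound from the Ding--Li--Zhang count, then realise it by the explicit diagrams of Figure~\ref{figure:p-np-1}. For the \emph{upper bound}, by Section~\ref{subsubsection:negative} the value $\tb(L)=pq+1=-np^2+p+1$ puts the knot complement at $M(D^2;(p-1)/p,\,n/(np-1))$ of boundary slope $s=2$, so we are in case (DLZ1) with no constraint on the Seifert invariants. I would record the negative continued fraction expansions
\[ -\frac{1}{r_1}=[\underbrace{-2,\ldots,-2}_{p-1}]
\quad\text{and}\quad
-\frac{1}{r_2}=[-p,-n], \]
the latter being immediate from $-(np-1)/n=-p+1/n$. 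Feeding these into the formula of Section~\ref{subsubsection:tb-5}, the product $\prod_{i,j}|a_j^i+1|$ equals $1^{p-1}\cdot(p-1)(n-1)=(p-1)(n-1)$, while $[s]=2$ and $(a_1-1)a_2=1$, so the complement carries exactly $2(p-1)(n-1)$ tight structures of zero Giroux torsion. Since $\tb(L)$ lies one above the maximal Thurston--Bennequin invariant $\otb=pq=-np^2+p$ of this negative torus knot in $(S^3,\xist)$ (cf.~\cite{etho01}, in accordance with Theorem~\ref{thm:EH} for the trefoil), none of these structures comes from a realisation in $\xist$; hence all $2(p-1)(n-1)$ of them are strongly exceptional, which is the upper bound.

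For the \emph{lower bound} I would analyse Figure~\ref{figure:p-np-1}. The topological identification of $L$ as the $(p,-(np-1))$-torus knot proceeds by the Kirby calculus already used at the start of Lemma~\ref{lem:lht} (blow up the bottom meridians, handle-slide $L$ over the $(-1)$-framed curve, blow down), reducing to a standard torus-knot picture. The diagram contains a single contact $(+1)$-surgery, so by Proposition~\ref{prop:strongly} every realisation is strongly exceptional as soon as its ambient structure is seen to be overtwisted. The value $\tb(L)=-np^2+p+1$ then follows from the LOSS formula~(\ref{eqn:tb}), and the stated expressions for $\rot(L)$ and $d_3(\xi)$ from~(\ref{eqn:rot}) and~(\ref{eqn:d_3}); in particular the nonzero $d_3$ certifies overtwistedness. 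This four-parameter linking-matrix bookkeeping I would relegate to Section~\ref{section:computations}.

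The decisive step is to check that the $2(p-1)(n-1)$ diagrams give pairwise inequivalent knots. Setting $w=n(k-l)+(v-u)$, the rotation number is $\rot(L)=\pm R$ with $R=p(np-1+w)$. As $k+l=p-2$ and $u+v=n-2$, the quantities $k-l$ and $v-u$ range over $\{-(p-2),\ldots,p-2\}$ and $\{-(n-2),\ldots,n-2\}$ in steps of two; because $|v-u|\le n-2<n$ and (using their fixed parities, $p$ and $n$ modulo $2$) the assignment $(k-l,v-u)\mapsto w$ is injective, the $(p-1)(n-1)$ values of $R$ are distinct. A one-line estimate gives $|w|\le np-n-2<np-1$, whence $R\ge p(n+1)>0$ for every configuration; thus $R$ never vanishes and the two orientations always yield genuinely distinct rotation numbers, with no collision between any $+R$ and $-R'$. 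Hence all $2(p-1)(n-1)$ realisations are separated by their rotation numbers, matching the upper bound and completing the classification. I expect this combinatorial distinctness check, together with the enlarged linking-matrix computation behind~(\ref{eqn:rot}) and~(\ref{eqn:d_3}), to be the main obstacle, since it is the only place where the full four-parameter structure of the family is genuinely used.
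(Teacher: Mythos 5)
Your proposal follows the paper's proof essentially verbatim: the same Ding--Li--Zhang upper bound of $2(p-1)(n-1)$ obtained from the boundary slope $s=2$ and the continued fraction expansions $[-2,\ldots,-2]$ and $[-p,-n]$, the same Kirby-calculus identification of $L$ as a $(p,-(np-1))$-torus knot via \cite[Figure~7]{geon18}, the same appeal to the single contact $(+1)$-surgery and the $d_3$- (or $\otb$-) criterion for overtwistedness, and the same deferral of the linking-matrix computations of $\tb$, $\rot$ and $d_3$ to Section~\ref{section:computations}. The only genuine addition is your explicit parity-and-size argument showing that the $2(p-1)(n-1)$ rotation numbers $\pm p(np-1+n(k-l)+(v-u))$ are pairwise distinct and nonzero, a point the paper asserts without detail; that argument is correct.
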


\begin{figure}[h]
\labellist
\small\hair 2pt
\pinlabel $L$ [l] at 59 56
\pinlabel $-1$ [l] at 59 16
\pinlabel $-1$ [l] at 59 23
\pinlabel $+1$ [l] at 59 32
\pinlabel $p-1\biggl\{\biggr.$ [r] at 10 19
\pinlabel $k$ [r] at 0 92
\pinlabel $l$ [l] at 63 91
\pinlabel $-1$ [bl] at 44 110
\pinlabel $u$ [r] at 0 144
\pinlabel $v$ [l] at 58 143
\pinlabel $-1$ [bl] at 39 164
\endlabellist
\centering
\includegraphics[scale=1.4]{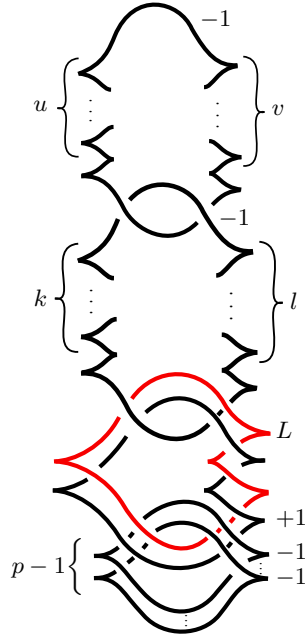}
  \caption{The $2(n-1)(p-1)$ exceptional $(p,-(np-1))$-torus knots
           with $\tb=-np^2+p+1$.}
  \label{figure:p-np-1}
\end{figure}

\begin{proof}
The Seifert invariants are $r_1=p/(p-1)$ and $r_2=n/(np-1)$,
and the boundary slope is $s=2$.
We have the continued fraction expansions
\[ -\frac{1}{r_1}=[\underbrace{-2,\ldots,-2}_{p-1}]\;\;\;\text{and}\;\;\;
-\frac{1}{r_2}=[-p,-n].\]
The formula in Section~\ref{subsubsection:tb-5} gives us $2(p-1)(n-1)$
distinct contact structures of zero Giroux torsion on the
knot complement. Thus,
we need to verify that Figure~\ref{figure:p-np-1} shows
$2(p-1)(n-1)$ distinct Legendrian realisations of the
$(p,-(np-1))$-torus knot.

As before, topologically we transform $L$ into a meridian of
the parallel knot. We then have the same diagram as
in~\cite[Figure~7]{geon18}, where it is shown that $L$
is a $(p,-(np-1))$-torus knot.

For the remaining calculations see Section~\ref{subsection:compute-}.
\end{proof}

\begin{rem}
(1) By \cite[Theorem~4.1]{etho01}, the maximal Thurston--Bennequin invariant
for realisations of the $(p,q)$-torus knot in $(S^3,\xist)$ is
\[ \otb_{(p,q)}=pq-p-q\;\;\text{for $p,q>0$}\]
and
\[ \otb_{(p,q)}=pq\;\;\text{for $p>0$, $q<0$}.\]
In particular, we have
\[ \otb_{(p,np+1)}=np^2-np-1<np^2+p+1\]
and
\[ \otb_{(p,-(np-1))}=-np^2+p<-np^2+p+1,\]
from which we can deduce directly that the contact structures
on $S^3$ described by the surgery diagrams in Figures~\ref{figure:pnp-plus-1}
and~\ref{figure:p-np-1} must be overtwisted.

(2) Alternative descriptions of the exceptional $(p,-(np-1))$-torus
knots can be found in~\cite[Figure~6]{geon18}. In that paper,
we showed that, ignoring the orientation of~$L$,
the $(p-1)(n-1)$ realisations can be distinguished by
the result of performing contact $(-1)$-surgery on them: one
obtains the lens space  $L(np^2-p+1,p^2)$ with $(p-1)(n-1)$
pairwise homotopically distinct contact structures, detected
by the Euler class. To distinguish the different orientations
of~$L$, one needs the rotation number.
\end{rem}
\section{Some computations}
\label{section:computations}
In this section we collect a few more details about the
calculations in the general cases of the previous sections.
\subsection{Left-handed trefoils}
\label{subsection:compute-lht}
For the computation of the rotation number in Lem\-ma~\ref{lem:lht-rot}
for arbitrary $m$ we order the knots in the surgery diagram
(Figure~\ref{figure:lht-tbgeq-5})
from the bottom to the top of the vertical chain, followed by the three
knots at the top, starting at the right. Then the linking matrix $M$
(with all knots oriented clockwise) becomes

\[ M=\left(\begin{array}{ccccccccc}
-1     & \makebox[0pt][l]{$\smash{\overbrace{
        \phantom{\begin{matrix}-2&-2&\ldots&-2&-2-\end{matrix}}}^{\text{$m$}}}$}
             -1 &  0     & \ldots &  0 & \ldots & \ldots & \ldots & 0\\
-1     & -2     & -1     & \ddots &  0 & \ldots & \ldots & \ldots & \vdots\\
0      & -1     & -2     & \ddots &  0 & \ldots & \ldots & \ldots & \vdots\\
\vdots &  0     & -1     & \ddots & -1 &  0     & \ldots & \ldots & \vdots\\
\vdots & \vdots &  0     & \ddots & -2 & -1     & 0      & 0      & 0\\
\vdots & \vdots & \vdots & \ddots & -1 & -2     & 1      & 1      & 0\\
\vdots & \ldots & \ldots & \ldots &  0 &  1     & -2     & 0      & 0\\
\vdots & \ldots & \ldots & \ldots &  0 &  1     & 0      & -2     & 1\\
0      & \ldots & \ldots & \ldots &  0 &  0     & 0      & 1      & -2
\end{array}\right).\]
The vectors of rotation and linking numbers, with $L$ also
oriented clockwise, are
\[ \vrot=(1,0,\ldots,0)^{\ttt}\;\;\;
\text{and}\;\;\;
\vlk=(-2,-1,0,\ldots,0)^{\ttt},\]
respectively. By the formula (\ref{eqn:rot}) for $\rot(L)$ in the proof
of Lemma~\ref{lem:lht-rot} (or rather the obvious generalisation
of this formula to any number of surgery knots), we need only compute
the first entry of $M^{-1}\vlk$, for which it suffices
to know the first row of $M^{-1}$. By the symmetry of $M$ (and
hence~$M^{-1}$), this is the
same as the first column of $M^{-1}$, that is, $M^{-1}(1,0,\ldots,0)^{\ttt}$.

It is easy to verify that $\bfx:=M^{-1}(1,0,\ldots,0)^{\ttt}$ equals
\[ \bfx=\bigl(6-m,-(7-m),8-m,\ldots,(-1)^m\cdot 6,
(-1)^m\cdot 3, (-1)^m\cdot 4,(-1)^m\cdot 2\bigr)^{\ttt}.\]
Then
\[ \rot(L)=1-(-2,-1,0,\ldots,0)\bfx=6-m.\]

Next we compute the $d_3$-invariant for the surgered $S^3$ shown in
Figure~\ref{figure:lht-tbgeq-5}, which we claimed in Lemma~\ref{lem:lht-d3}
to equal $3/2$ for all $m\in\N$. For $m=0$ we had $\chi(X)=5$
and $\sigma(X)=-2$. Each additional $2$-handle adds $1$ to the Euler
characteristic, so the corresponding handlebody $X_m$ has
$\chi(X_m)=m+5$. The additional surgery curves correspond to negative
blow-ups, which gives $\sigma(X_m)=-2-m$. The square of the first
Chern class is computed as $\bfx^{\ttt}M\bfx$, where $\bfx$ is a solution
of $M\bfx=\vrot$. So this is indeed the vector $\bfx$
we have found above, which yields $c^2=6-m$. Putting this information
into the formula (\ref{eqn:d_3}) for $d_3$ we find the the claimed value.

\subsection{Positive torus knots}
\label{subsection:compute+}
In Figure~\ref{figure:pnp-plus-1} we take the `shark' parallel to $L$
as the first knot, followed by the $n$ parallel unknots at the top
and the $p-1$ parallel unknots at the bottom. Then the
linking matrix $M$ (with all knots oriented clockwise) takes
the form

\[ M=\left(\begin{array}{ccccccc}
-1  &
\makebox[0pt][l]{$\smash{\overbrace{\phantom{
\begin{matrix}-p-1&\ldots&-p-\end{matrix}}}^{\text{$n$}}}$}
     -1 &  \ldots & -1 &
\makebox[0pt][l]{$\smash{\overbrace{\phantom{
\begin{matrix}-2&\ldots&-2..\end{matrix}}}^{\text{$p-1$}}}$}
  -1 & \ldots & -1\\
-1     & -p-1   & \ldots & -p     & 0      & \ldots & 0\\
\vdots & \vdots & \ddots & \vdots & \vdots & \vdots & \vdots \\
-1     &  -p    & \ldots & -p-1   & 0      & \ldots & 0\\
-1     &  0     & \ldots & 0      & -2     & \ldots & -1\\
\vdots & \vdots & \vdots & \vdots & \vdots & \ddots & \vdots\\
-1     & 0      & \ldots & 0      & -1     & \ldots & -2\\
\end{array}\right).\]
Here all off-diagonal elements in the two quadratic subblocks
of size $n$ and $p-1$ are $-p$
and $-1$, respectively.

By elementary row and column reduction one checks that
$\det M=(-1)^{n+p}$. Similarly, for the extended matrix
$M_0$ (as defined in the proof of Lemma~\ref{lem:lht-rot}), one
finds $\det M_0=(-1)^{n+p}(np^2+p+3)$. With (\ref{eqn:tb}) this yields
\[ \tb(L)=\tb_0+\frac{\det M_0}{\det M}=np^2+p+1.\]

In order to compute the rotation number with formula~(\ref{eqn:rot}),
we first need to determine
$M^{-1}\vlk$, where $\vlk=(-2,-1,\ldots,-1)^{\ttt}$ is the vector of
linking numbers of $L$ with the $n+p$ surgery knots.
This computation can be simplified
by summing over the two boxes of size $n$ and $p-1$ in~$M$. Thus, we
define the `deflated' matrix
\[ M':=\left(\begin{array}{ccc}
-1 & -n    & -p+1\\
-1 & -np-1 & 0\\
-1 & 0     & -p
\end{array}\right)\]
and solve the equation
$M'\bfy=(-2,-1,-1)^{\ttt}$. Notice that $(-2,-1,-1)^{\ttt}$
is the deflated vector of linking numbers. This gives
\[ \bfy=(np^2+p+1,-p,-np-1)^{\ttt}.\]
Write $\vrot=(1,l-k,0)^{\ttt}$ for the deflated vector of rotation
numbers. With formula~(\ref{eqn:rot}) we obtain
\[ \rot(L)=\rot_0-\vrot^{\ttt}
\left(\begin{array}{ccc}
1 & 0 & 0\\
0 & n & 0\\
0 & 0 & p-1
\end{array}\right)\bfy=np(l-k)-np^2-p,\]
or the negative of that for the counter-clockwise orientation of $L$.
As $k$ and $l$ range over $\N_0$ subject to the condition
$k+l=p-1$, some simple arithmetic shows that this gives $2p$ distinct
values for the rotation number.

\begin{rem}
\label{rem:DK}
The contact $(-1)$-surgeries along the $n$ or $p-1$ parallel knots
in Figure~\ref{figure:pnp-plus-1} (which are Legendrian push-offs of one
another) are equivalent, by the algorithm of~\cite{dgs04}, to a contact
$(-1/n)$- or $(-1/(p-1))$-surgery along a single copy of the respective knot.
For diagrams involving contact $(1/k)$-surgeries, $k\in\Z$, one can directly
apply the formula in \cite[Theorem~2.2]{duke16} to compute $\rot(L)$.
For the diagram at hand, that formula is identical to the one
we obtained above by `deflation'.
\end{rem}

For the computation of the $d_3$-invariant we first observe that
the handlebody $X$ described by the surgery diagram in
Figure~\ref{figure:pnp-plus-1} has Euler characteristic $\chi(X)=1+n+p$,
since the number of $2$-handles is $n+p$, and signature $\sigma(X)=-n-p$,
as can be seen from the $n+p$ negative blow-downs
during the Kirby moves in~\cite[Figure~18]{onar10}.
The solution $\bfx$ of the equation $M'\bfx=\vrot$ is
\[ \bfx=\left(\begin{array}{c}
-np^2+np(l-k)-p\\
p-(l-k)\\
np-n(l-k)+1
\end{array}\right).\]
This gives
\[ c^2=\bfx^{\ttt}\left(\begin{array}{ccc}
1 & 0 & 0\\
0 & n & 0\\
0 & 0 & p-1
\end{array}\right)
\vrot=-n(p-l+k)^2-p.\]
Formula (\ref{eqn:d_3}) for the $d_3$-invariant (in the proof of
Lemma~\ref{lem:lht-d3}) then yields the value of~$d_3$
as claimed in Proposition~\ref{prop:positive}. Notice that the
$d_3$-invariant never attains the value $-1/2=d_3(\xist)$,
so the contact structures on $S^3$ given by these surgery diagrams
are overtwisted.

\begin{rem}
(1) We may check our calculation of the $d_3$-invariant against the formula
in \cite[Theorem~5.1]{duke16} by translating the surgeries
in Figure~\ref{figure:pnp-plus-1} into rational surgeries as explained
in Remark~\ref{rem:DK}. Our deflated matrix $M'$ is precisely
the matrix $Q$ on page 525 of \cite{duke16}, obtained from the
rational surgery coefficients. The characteristic polynomial
of this matrix has three negative real roots, so
in the notation of \cite{duke16} we have $\sigma(M')=-3$. The
rational contact surgery coefficients are $+1$, $-1/n$, and $-1/(p-1)$.
Plugging this into the formula of \cite[Theorem~5.1]{duke16}, we obtain
\begin{eqnarray*}
d_3(\xi) & = & \frac{1}{4}\bigl( c^2+(3-1)-(3-n)-(3-(p-1))\bigr)
               -\frac{3}{4}\sigma(M')-\frac{1}{2}\\[.5mm]
         & = & \frac{n}{4}\bigl(1-(p-l+k)^2\bigr)+\frac{1}{2},
\end{eqnarray*}
as in Proposition~\ref{prop:positive}.

(2) A formula for computing the Thurston--Bennequin invariant of
Legendrian knots represented in surgery diagrams involving rational
contact surgeries was found in~\cite{kege16}. This formula tells
us that, with $\vlk=(-2,-1,-1)^{\ttt}$ denoting the vector of
linking numbers in the rational diagram (i.e.\ the deflated vector),
\[ \tb(L)=\tb_0-\bfy^{\ttt}\left(\begin{array}{ccc}
1 & 0 & 0\\
0 & n & 0\\
0 & 0 & p-1
\end{array}\right)\vlk=np^2+p+1,\]
confirming our earlier computation.
\end{rem}
\subsection{Negative torus knots}
\label{subsection:compute-}
In Figure~\ref{figure:p-np-1} we take the knot parallel to $L$
as the first knot, followed by the $p-1$ unknots at the bottom,
and finally the two knots above~$L$. With all knots oriented clockwise,
the linking matrix is\\[.8mm]
\[ M=\left(\begin{array}{cccccc}
-1  &
\makebox[0pt][l]{$\smash{\overbrace{\phantom{
\begin{matrix}-1&\ldots&-1..\end{matrix}}}^{\text{$p-1$}}}$}
         -1     & \ldots & -1     & -1     & 0\\
-1     & -2     & \ldots & -1     & 0      & 0\\
\vdots & \vdots & \ddots & \vdots & \vdots & \vdots\\
-1     & -1     & \ldots & -2     & 0      & 0\\
-1     & 0      & \ldots & 0      & -p     & -1\\
0      & 0      & \ldots & 0      & -1     &-n
\end{array}\right).\]
This has $\det M=(-1)^{p+1}$, and the determinant of the extended matrix
$M_0$ is $\det M_0=(-1)^{p-1}(-np^2+p+3)$. By (\ref{eqn:tb})
the Thurston--Bennequin invariant of $L$ in the surgered $S^3$ is
\[ \tb(L)=\tb_0+\frac{\det M_0}{\det M}=-np^2+p+1.\]
We now sum over the quadratic subblock of size $p-1$ to obtain the
deflated matrix
\[ M':=\left(\begin{array}{cccc}
-1 & -p+1 & -1 & 0\\
-1 & -p   & 0  & 0\\
-1 & 0    & -p & -1\\
0  & 0    & -1 & -n
\end{array}\right).\]
The deflated vectors of rotation numbers and linking numbers are
\[ \vrot=(1,0,l-k,v-u)^{\ttt}\;\;\;\text{and}\;\;\;
\vlk=(-2,-1,-1,0)^{\ttt},\]
respectively. The solution $\bfy$ of $M'\bfy=\vlk$ is
\[ \bfy= \left(\begin{array}{c}
-np^2+p+1\\
np-1\\
np\\
-p\end{array}\right),\]
so we find
\[ \rot(L)=\rot_0-\vrot^{\ttt}
\left(\begin{array}{cccc}
1 & 0   & 0 & 0\\
0 & p-1 & 0 & 0\\
0 & 0   & 1 & 0\\
0 & 0   & 0 & 1
\end{array}\right)\bfy=
np^2-p-np(l-k)+p(v-u),\]
or the negative of that for the counter-clockwise orientation
of~$L$. For the different choices of $(k,l)$ and $(u,v)$,
these $2(p-1)(n-1)$ rotation numbers are pairwise
distinct and distinguish the Legendrian realisations.

The number of $2$-handles is $p+2$, so the handlebody $X$
described by the Kirby diagram in Figure~\ref{figure:p-np-1}
has $\chi(X)=p+3$. From the Kirby moves in \cite[Figure~7]{geon18}
one reads off that $\sigma(X)=1-(p+1)=-p$.
The solution of the equation $M'\bfx=\vrot$ is
\[ \bfx=\left(\begin{array}{c}
np^2-p-np(l-k)+p(v-u)\\
-np+1+n(l-k)-(v-u)\\
-np+n(l-k)-(v-u)\\
p-(l-k)
\end{array}\right).\]
We then compute
\[ c^2=\bfx^{\ttt}\left(\begin{array}{cccc}
1 & 0   & 0 & 0\\
0 & p-1 & 0 & 0\\
0 & 0   & 1 & 0\\
0 & 0   & 0 & 1
\end{array}\right)\vrot=n(p-l+k)^2+2(p-l+k)(v-u)-p.\]
Putting all this information into the formula (\ref{eqn:d_3}) for $d_3$
gives the value claimed in Proposition~\ref{prop:p-np-1}.
Again we observe that the $d_3$-invariant never takes the value~$-1/2$.
\begin{ack}
This project was initiated during a \emph{Research in Pairs} stay
at the Mathematisches Forschungsinstitut Oberwolfach, and it was continued
during a couple of visits by S.O.\ to the Universit\"at zu K\"oln,
supported by a Turkish Academy of Sciences T\"UBA--GEBIP
and by the SFB/TRR 191 `Symplectic Structures in Geometry, Algebra
and Dynamics', funded by the DFG. We thank Sebastian Durst and Marc Kegel
for useful conversations and comments on a preliminary version of this paper.
We also thank the two referees for constructive comments.
\end{ack}

\end{document}